\newcommand{\vertiii}[1]{{\left\vert\kern-0.25ex\left\vert\kern-0.25ex\left\vert #1 \right\vert\kern-0.25ex\right\vert\kern-0.25ex\right\vert}}
\crefname{hypothesis}{Hypothesis}{Hypotheses}
\title{Deep collocation method: A framework for solving PDEs using neural networks with error control %\thanks{Submitted to the editors DATE.}
}
\author{Mingxing Weng\thanks{School of Mathematical Sciences, Shanghai Jiao Tong University,
	Shanghai 200240, China; School of Mathematical Science, Eastern Institute of Technology, Ningbo, 
	Zhejiang 315200, China
	(mxweng22@sjtu.edu.cn)}
\and Zhiping Mao\thanks{School of Mathematical Science, Eastern Institute of Technology, Ningbo, 
		Zhejiang 315200, China
  		(zmao@eitech.edu.cn, jshen@eitech.edu.cn).}
\and Jie Shen\footnotemark[2]
}
\begin{document}

\maketitle

% REQUIRED
\begin{abstract}
	Neural networks have shown significant potential in solving partial differential equations (PDEs). While deep networks are capable of approximating complex functions, direct one-shot training often faces limitations in both accuracy and computational efficiency. To address these challenges, we propose an adaptive method that uses single-hidden-layer neural networks to construct basis functions guided by the equation residual. The approximate solution is computed within the space spanned by these basis functions, employing a collocation least squares scheme. As the approximation space gradually expands, the solution is iteratively refined; meanwhile, the progressive improvements serve as reliable {\it a posteriori} error indicators that guide the termination of the sequential updates. Additionally, we introduce adaptive strategies for collocation point selection and parameter initialization to enhance robustness and improve the expressiveness of the neural networks. We also derive the approximation error estimate and validate the proposed method with several numerical experiments on various challenging PDEs, demonstrating both high accuracy and robustness of the proposed method.
\end{abstract}

% REQUIRED
\begin{keywords}
	 neural network; deep collocation method;  error estimate; adaptive method; geometric convergence 
\end{keywords}

% REQUIRED
\begin{MSCcodes}
	68T07, 65N22
	% 65M22(2010???now)Numerical solution of discretized equations for initial value and initial-boundary value problems involving PDEs [See also 65Fxx, 65Hxx]
	% 65N22(1991???now)Numerical solution of discretized equations for boundary value problems involving PDEs [See also 65Fxx, 65Hxx]
	% 68T07(2020???now)Artificial neural networks and deep learning
\end{MSCcodes}

\section{Introduction}

% Background and significance of PDEs in modeling physical, biological, and engineering processes
Partial differential equations (PDEs) are ubiquitous in modeling various physical, biological, and engineering processes. %From fluid dynamics and heat transfer to financial modeling and electromagnetics, PDEs provide a mathematical framework for understanding and predicting complex systems. 
Traditional numerical methods, such as finite difference methods (FDM), finite element methods (FEM), and spectral methods, have long been the standard tools for solving PDEs. These techniques have achieved substantial success in scientific computing and have been applied extensively to a wide range of scientific and engineering problems.

% Overview of traditional numerical methods (FDM, FEM, spectral methods) and their success in solving PDEs
However, traditional numerical methods suffer from a significant limitation known as the curse of dimensionality. These methods typically discretize the problem domain into grids or meshes. While effective for low-dimensional problems, the number of grid points required for accurate solutions grows exponentially with  dimensions, leading to prohibitive computational storage and cost. Additionally, generating appropriate meshes can be time-consuming, particularly for problems with complex geometries. Consequently, mesh-based methods become impractical for high-dimensional  problems. Moreover, traditional mesh-based methods lack flexibility, requiring 
%substantial computational effort to achieve high accuracy for problems with non-smooth solutions while simultaneously necessitating 
significant structural modifications to accommodate changes in spatial dimensions or PDE types.

%However, traditional numerical methods suffer from a significant limitation known as the curse of dimensionality. These methods typically discretize the problem domain into grids or meshes and solve the PDE approximately at specified points. While effective for low-dimensional problems, the number of grid points required for accurate solutions grows exponentially with additional dimensions, leading to prohibitive computational costs. Additionally, generating appropriate meshes can be time-consuming, particularly for problems with complex geometries. Consequently, these methods become less feasible for high-dimensional or complex domain problems, where both computational resources and time are critical concerns.

% Motivation for exploring neural network-based solvers for PDEs
To overcome these challenges, machine learning-based methods have emerged as a promising alternative. Neural networks, in particular, provide a mesh-free approach to solving PDEs, effectively bypassing the curse of dimensionality~\cite{han2018solving} and offering greater flexibility in handling complex geometries. Techniques like physics-informed neural networks (PINN) \cite{Raissi2019PhysicsinformedNN, Lu2021PhysicsinformedNN}, Deep Ritz Method (DRM) \cite{Weinan2017TheDR}, Weak Adversarial Networks (WAN) \cite{Zang2019WeakAN}, Deep Galerkin
Method (DGM) \cite{sirignano_dgm_2018} have gained popularity for their ability to approximate PDE solutions without the need for predefined meshes. \emph{However, despite their potential applications in engineering, machine learning methods still face limitations in terms of accuracy and computational efficiency.} Training neural networks involves solving a highly nonlinear and nonconvex optimization problem, which can be particularly challenging when  high accuracy is needed. In addition, the convergence is usually not guaranteed.

% Overview of recent advancements in neural network-based methods for PDEs
In recent years, researchers have introduced various strategies to enhance the accuracy and efficiency of neural network-based solvers for PDEs. A significant focus has been on improving PINN, with a wide range of advancements, including adaptive sampling techniques for handling sharp solutions~\cite{mao2020physics, Gao2022FailureinformedAS, mao2023physics} or high-dimensional problems~\cite{tang2023pinns, zhang2025annealed}, homogenization methods for multiscale PDEs~\cite{leung2022nh}, and second-order optimization methods to support multitask learning~\cite{wang2025gradient}, among others. 
Beyond the PINN framework, two particularly noteworthy high-accuracy methods include the randomized neural networks based approaches \cite{huang2006extreme,dong2021local,chen_bridging_2022,zhang_transferable_2024,dang_adaptive_2024}, which aim to reduce optimization errors and simplify training, and the residual-driven iterative methods  \cite{ainsworth_galerkin_2021, wang_multi-stage_2024, aldirany_multi-level_2023, xu_multi-grade_2023}, which progressively refine solutions through successive approximations.

Randomized Neural Networks (RNN) \cite{PAO1994163,471375} offer a promising approach to reduce the optimization error introduced by training algorithms, albeit at the cost of some approximation capability. Specifically, RNN simplify the training process by fixing certain parameters and focusing optimization on the linear coefficients before the output layer, reducing the problem to a least squares solution of a linear system. A prominent example is the Extreme Learning Machine (ELM) \cite{huang2006extreme}, a single-hidden-layer RNN that has been successfully applied to solve PDEs in strong-form  \cite{yang2018novel,fabiani2021numerical,WANG2024116578}. Building on ELM, Dong et al. introduced the Local Extreme Learning Machine (locELM) \cite{dong2021local}, which integrates ELM with domain decomposition, using a local feedforward network for each subdomain. This method strikes an effective balance between computational efficiency and high accuracy solutions.
The Random Feature Method (RFM), introduced by Chen et al. \cite{chen_bridging_2022}, extends locELM by replacing domain decomposition with the Partition of Unity (PoU) technique. RFM incorporates multiscale basis functions into the network and includes adaptive weight scaling in the loss function, achieving near-spectral convergence in experiments, including a challenging problem in porous geometry.

%%%%
The aforementioned RNN-based methods deliver high accuracy for smooth solutions, but they struggle with problems having singularities or sharp interfaces. In addition, both locELM and RFM \emph{heavily rely on the initialization of hidden layer parameters}. To mitigate the need for manual tuning, Zhang et al. \cite{zhang_transferable_2024} proposed a transferable hidden layer parameter pre-training method that enhances network expressiveness by fitting randomly generated Gaussian random fields. Based on a similar initialization strategy, Wang et al. \cite{dang_adaptive_2024} developed the Adaptive Growing Randomized Neural Network (AG-RNN), which dynamically adjusts the network size throughout the iterative solution process. By constructively adding neurons based on the residuals from previous steps, AG-RNN enhances its capacity to represent complex solutions without introducing excessive additional optimization steps. AG-RNN has demonstrated high accuracy in solving PDEs with sharp or discontinuous solutions, offering an efficient and adaptive framework for tackling challenging problems.

These RNN-based methods offer a sophisticated blend of neural networks' representational power and traditional numerical techniques' robustness. By constructing approximation spaces with neural networks and applying least squares methods during training, they strike a balance between computational efficiency and solution accuracy.
Nevertheless, all aforementioned RNN-based methods have the fundamental issue of \emph{illconditioning}.

Beyond RNN-based approaches, residual-based methods have also garnered considerable interest. By minimizing the residuals at each step, these methods achieve progressive improvement in the solution, often leading to highly accurate results. 
% For a linear PDE of the form $\mathcal{L}u = f$, they iteratively solve $\mathcal{L} e^n = f - \mathcal{L}(u^n)$, where $u^n$ represents the solution and $e^n$ the residual at the $n$-th iteration. 
The Galerkin Neural Networks (GNN), developed by Ainsworth et al. \cite{ainsworth_galerkin_2021, ainsworth_galerkin_2022}, proposed a residual-driven iterative scheme in the weak form of PDEs. This method provides an error estimator as a stopping criterion, ensuring high accuracy. The Extended Galerkin Neural Networks (xGNN) \cite{ainsworth_extended_2024} further generalize this approach to handle broader boundary value problems, particularly improving accuracy in the presence of singularities. However, a key limitation of GNN is its reliance on the mesh-based Gaussian quadrature for numerical integration, which significantly increases computational cost and leads to the curse of dimensionality in high-dimensional problems. In addition, GNN faces challenges in handling nonlinear problems and problems in complex domains.
In contrast to GNN, which works with the weak form of PDEs, Multi-stage Neural Networks (MSNN) by Wang et al. \cite{wang_multi-stage_2024} and the Multi-level Neural Networks (MLNN) by Aldirany et al. \cite{aldirany_multi-level_2023} apply residual corrections in the strong form of PDEs. Both methods introduce new networks at each iteration, specifically tailored to reduce the residual of the previous stage. With appropriate initialization, these approaches have achieved substantial error reduction, reaching machine precision in some cases.
Another development is the Multigrade Neural Network (MGNN) proposed by Xu et al. \cite{xu_multi-grade_2023, xu_multi-grade_2023-1}. MGNN deepens the network structure with each iteration while preserving fixed parameters from earlier stages, simplifying optimization and enhancing accuracy. Nevertheless, its training process exhibits inconsistent error reduction, making convergence rates less predictable. Furthermore, MSNN, MLNN, and MGNN all suffer from prolonged training times due to their reliance on the Adam optimizer, which requires a large number of optimization steps.

% Challenges and limitations of existing methods
%Most RNN-based methods achieve high accuracy provided that the solution is sufficiently smooth. However, they typically use a single network to approximate the solution, which makes it challenging to handle sharp or singular problems. GNN, based on the Galerkin framework, is restricted by quadrature, making it unsuitable for high-dimensional and complex region problems. Multi-stage Neural Networks and Multi-level Neural Networks employ PINN at each stage, but their optimization processes are time-consuming.

The aim of this work is to propose a collocation framework for solving a wide range of PDEs using neural networks with error control. 
In particular, we follow the idea of GNN by greedily constructing a sequence of finite-dimensional subspaces, where the basis functions are generated through a series of neural networks. Our approach incorporates adaptive strategies for network initialization and operates within a collocation framework, leveraging a least squares formulation for network training. This design significantly enhances \emph{computational efficiency, accuracy, and flexibility}.

The proposed method enjoys several key advantages:
%label=\arabic*., 
\begin{itemize}[left=0pt] 
\item \emph{Robustness, efficiency and high accuracy:} We integrate several adaptive strategies to reduce the difficulty in training neural networks and alleviate the challenges of hyperparameter tuning. Moreover, the method maintains stable performance when hyperparameters are chosen within a suitable range, ensuring \emph{robustness, efficiency and accuracy}.

\item \emph{Guaranteed convergence:} We establish a convergence property demonstrating that the error remains non-increasing throughout iterations. Under appropriate network optimization assumptions, we derive a reliable {\it a posteriori} error estimator and provide an error estimate, offering theoretical insights into achieving \emph{geometric convergence}. These findings are further validated by numerical experiments.

\item \emph{Broader applicability:} 
Compared with GNN, the proposed collocation-based method exhibits broader applicability across various problems, such as \emph{nonlinear problems and problems in complex domains}. 
%By enforcing constraints at collocation points, our approach eliminates the need for quadrature rules, reducing computational costs and facilitating direct application to various types of PDEs, regardless of their order, spatial dimensionality, domain shape, or the feasibility of variational formulations.

\end{itemize}

The remainder of this paper is organized as follows:
In \Cref{sec:methodology}, we introduce the proposed methodology, outlining the collocation framework, the construction of neural network basis functions, and the adaptive strategies used for the selection of collocation points and network initialization.
\Cref{sec:analysis} is dedicated to proving the convergence of the method, as well as providing an {\it a posteriori} indicator to guide the termination of the iterative process.
In \Cref{sec:numerical-results}, we present numerical experiments that assess the performance of the proposed method across a range of PDEs.
Finally, in \Cref{sec:conclusions}, we summarize our findings and suggest possible avenues for future research.

\section{Methodology}
\label{sec:methodology}

In this section, we present the fundamental framework of our algorithm. The core idea is to construct a set of adaptive basis functions using single-hidden-layer neural networks (SLFN) and solve PDEs through a collocation scheme.
\Cref{sec:problem_state} introduces the linear PDE setting and the corresponding collocation framework, followed by an outline of the overall algorithmic procedure. Subsequently, \Cref{sec:basis_construction} and \ref{sec:adaptive} describe, respectively, the training process for constructing neural network basis functions and the adaptive strategies designed to enhance the solver performance.

\subsection{Problem Statement and Collocation Scheme}
\label{sec:problem_state}
Consider the following linear PDE defined on a bounded domain \(\Omega \subset \mathbb{R}^d\) with boundary \(\partial \Omega\):  
\begin{equation}
    \label{equ:linear_pde}
    \begin{aligned}
        \mathcal{L}u = f, \quad \text{in } \Omega, \quad 
        \text{and }
        \mathcal{B}u = g, \quad \text{on } \partial \Omega,
    \end{aligned}
\end{equation}
where \(\mathcal{L}\) is a linear differential operator, \(\mathcal{B}\) is a linear boundary operator, and \(f\) and \(g\) are the source term and prescribed boundary data, respectively.

%The collocation scheme is a mesh-free method that approximates the solution \(u(\bm{x})\) as a linear combination of a given set of basis functions \(\Phi = \{\phi_n\}_{n=1}^N\). 
For the collocation method, the PDE and boundary conditions are enforced at discrete points within the domain and on the boundary. In particular, let the collocation points for the domain and boundary be \(X_\Omega = \{\bm{x}_m\}_{m=1}^{M_\Omega}\) and \(X_{\partial \Omega} = \{\bm{x}_m'\}_{m=1}^{M_{\partial \Omega}}\), respectively. 
%This results in a least squares system once \(M=M_{\Omega}+M_{\partial \Omega}>N\).
Given a set of basis functions \(\Phi = \{\phi_n\}_{n=1}^N\), and let the approximated solution given by $u_N(\bm{x}) = \sum_{n=1}^N \beta_n \phi_n(\bm{x})$, then the resulting least squares system is given by 
\[
    \bm{A} \bm{\beta}=\bm{b},
\]
where $\bm{\beta} = [\beta_1, \ldots, \beta_N]^T$ and 
\[
\bm{A}=\left(\begin{array}{c}
	\mathcal{L} \phi_n\left(\bm{x}_m\right)_{m, n=1}^{M_{\Omega}, N} \\[6pt]
	\lambda \phi_n\left(\bm{x}_m^\prime\right)_{m, n=1}^{M_{\partial \Omega}, N}
\end{array}\right)	
\in \mathbb{R}^{M\times N}, \quad 
\bm{b}=\left(\begin{array}{c}
	f\left(\bm{x}_m\right)_{m=1}^{M_{\Omega}} \\[6pt]
	\lambda g\left(\bm{x}_m^\prime\right)_{m=1}^{M_{\partial \Omega}}
\end{array}\right) \in \mathbb{R}^M.
\]
The upper block of \(\bm{A}\) corresponds to enforcing the PDE in the interior domain, while the lower block enforces the boundary conditions with penalty parameter \(\lambda\in\mathbb{R}^+\). We define this process as a function
\[
	\label{equ:least_squares}
	\bm{\beta}=\text{\scshape ColloLSQ}(\mathcal{L}, f,\mathcal{B}, g, \Phi, X_\Omega,X_{\partial\Omega},\lambda).
\]
%Hence, our approximate solution is $u(\bm{x}) \approx \sum_{n=1}^N \beta_n \phi_n(\bm{x}).$

For traditional numerical approaches, e.g., Fourier series or polynomial expansions, the choice of basis functions is typically prescribed. While effective in many scenarios, these methods may become inadequate when trying to resolve localized features or steep gradients. To overcome these limitations,
we propose to construct adaptive basis functions using neural networks by employing a greedy procedure, which allows more flexibility in capturing complex solution behaviors directly from the residual of the PDEs.

To facilitate a global understanding of the proposed methodology, Algorithm \ref{alg:entire} presents the overall PDE-solving framework, referencing key sub-algorithms that are elaborated upon in the subsequent subsections. This structure provides the reader with a clear roadmap for understanding how each component integrates into the complete solution process.
\begin{algorithm}[hbpt]
    \caption{Main Algorithm}
    \label{alg:entire}
    \SetAlgoLined
    \SetAlgoNoEnd
    Start with an initial guess \(u_0\) and set the initial basis \(\psi_0 = u_0\). Generate uniform collocation points \(X_\Omega^1\) and \(X_{\partial\Omega}\), and set \(s=1\)\;
    Sampling new residual-based points \(X_{\Omega}^{s,2}\) by \emph{Rejection Sampling} (see Algorithm \ref{alg:rejection}), set \(X_\Omega^s = X_{\Omega}^1 \cup X_{\Omega}^{2,s}, X_{\partial\Omega}^s = X_{\partial\Omega}\)\;
    Constructing a new neural network basis \(\psi_s\) by \emph{Adaptive Initialization} (see Algorithm \ref{alg:adaptive_init})\;
    Training the basis \(\psi_s\) to approximate the residual equation \eqref{equ:residual} using  \emph{Adaptive Basis Training} (see Algorithm \ref{alg:basis_training})\;
    Updating the approximation \(u_s\) by {\scshape ColloLSQ} in the augmented space \(\text{span}\{\psi_i\}_{i=0}^s\) (omit \(\psi_0\) if it equals zero)\;
    Set \(s\gets s+1\) and repeat steps (2)-(6) until the desired accuracy is achieved.
\end{algorithm}

\subsection{Construction of Neural Network Basis}
\label{sec:basis_construction}
We construct each basis function using a single-hidden-layer feedforward neural network (SLFN), which allows for efficient training using a hybrid strategy detailed in this section.
%Training deep neural networks typically involves solving highly nonlinear and non-convex optimization problems, which can result in prolonged training times and an increased risk of convergence to suboptimal local minima.

As demonstrated in the Extreme Learning Machine (ELM) framework \cite{huang2006extreme}, SLFNs can be trained efficiently by fixing the weights and biases of the hidden layer and solving a linear least squares problem to determine the output layer coefficients. While this approach reduces the optimization complexity, it may limit the expressiveness of the network. To enhance the representational capacity of the network, we use an \emph{adaptive initialization} strategy based on the residual of the PDE and apply a small number of Adam optimization steps after solving the least squares problem to refine the hidden layer parameters.

An SLFN with \( N \) hidden neurons is mathematically expressed as:
\[
	% \label{equ:slfn}
	f_{NN}(\bm{x}) = \sum_{i=1}^N c_i\,\sigma\big(\bm{w}_i \cdot \bm{x} + b_i\big),\quad \bm{x} \in \Omega \subset \mathbb{R}^d,
\]
where \( c_i \in \mathbb{R} \) are the output layer coefficients, \( \sigma \) is the activation function, and \( \bm{w}_i \in \mathbb{R}^d \), \( b_i \in \mathbb{R} \) represent the weights and biases of the hidden neurons. We define \( \bm{W} = [\bm{w}_1, \bm{w}_2, \ldots, \bm{w}_N] \), \( \bm{b} = (b_1, b_2, \ldots, b_N) \), and \( \bm{c} = (c_1, c_2,\cdots, c_N) \), encapsulating the network parameters as \( \theta = \{\bm{W}, \bm{b}, \bm{c}\} \).

Starting with an initial approximation \( u_0 \) (typically zero) and the initial basis function \(\psi_0 := u_0\), we iteratively construct adaptive basis functions using SLFNs. Following the multistage training strategy outlined in \cite{wang_multi-stage_2024}, we refer to the construction of each basis function as a stage. At stage \( s > 0 \), given the current approximation \( u_{s-1} \) and basis functions \( \Psi_{s-1} = \{\psi_i\}_{i=0}^{s-1} \) (omit \(\psi_0\) if it equals zero), we initialize a new SLFN basis \(\psi_s\) with width \(N_s\) and parameters \(\theta_s = \{\bm{W}_s, \bm{b}_s, \bm{c}_s\}\). The hidden layer parameters \((\bm{W}_s, \bm{b}_s)\) are adaptively initialized (detailed in \Cref{sec:adaptive}) to ensure flexibility in capturing the key features of the residual, which is defined as:
\begin{equation}
	\label{equ:residual}
	\begin{aligned}
		\mathcal{L}e_{s-1}(\bm{x}) &= f_{s-1}^{res}(\bm{x}) := f(\bm{x}) - \mathcal{L}u_{s-1}(\bm{x}), \quad \text{in } \Omega, \\ 
		\mathcal{B}e_{s-1}(\bm{x}) &= g_{s-1}^{res}(\bm{x}) := g(\bm{x}) - \mathcal{B}u_{s-1}(\bm{x}), \quad \text{on } \partial \Omega,
	\end{aligned}
\end{equation}
where \( e_s(\bm{x}) = u(\bm{x}) - u_s(\bm{x}) \) represents the error after stage \( s \).

To train \(\psi_s(\bm{x})\), we begin by fixing the hidden layer weights and biases, treating the SLFN as a linear combination of activation functions \(\Phi_s = \{\phi_{s,i}(\bm{x}) = \sigma(\bm{w}_{s,i} \cdot \bm{x} + b_{s,i})\}_{i=1}^{N_s}\). Next, we select the collocation points \(X_\Omega^s\) and \(X_{\partial \Omega}^s\) (as detailed in \Cref{sec:adaptive}) and apply the {\scshape ColloLSQ} function to compute the output layer coefficients \(\bm{c}_s\). The hidden layer parameters are then refined using the Adam optimizer to enhance the network performance, with the loss function defined as:
\begin{equation}
	\label{equ:loss}
	L(\theta_s) = \sum_{\bm{x} \in X} \left| \mathcal{L} \psi_s(\bm{x}) - f_{s-1}^{res}(\bm{x}) \right|^2 + 
	\lambda^2 \sum_{\bm{x} \in X_{bc}} \left| \mathcal{B} \psi_s(\bm{x}) - g_{s-1}^{res}(\bm{x}) \right|^2.
\end{equation}
Once the optimization process is completed, the output layer coefficients \(\bm{c}_s\) are updated once again using {\scshape ColloLSQ}. The complete training procedure is provided in Algorithm \ref{alg:basis_training}.

% training algorithm for a single stage
\begin{algorithm}[htbp]
	\caption{Adaptive Basis Training}
	\label{alg:basis_training}
	\SetAlgoLined
	\SetAlgoNoEnd
	\KwData{Residual equation information \((\mathcal{L},f_{s-1}^{res},\mathcal{B},g_{s-1}^{res})\) from \eqref{equ:residual}, current approximation \(u_{s-1}\), new basis \(\psi_s\) with parameters \(\theta_s=(\bm{W}_s, \bm{b}_s,\bm{c}_s)\), collocation points \(X_\Omega^s, X_{\partial\Omega}^s\), regularization parameter \(\lambda\), maximum optimization steps \(N_{opt}\), learning rate \(\alpha\)}
	\KwResult{Updated basis function \(\psi_{s}(\bm{x})\)}
	
	Compute the output layer coefficients \(\bm{c}_s = \text{\scshape ColloLSQ}(\mathcal{L}, f_{s-1}^{res},\mathcal{B},g_{s-1}^{res}, \Phi_s, X_\Omega^s, X_{\partial\Omega}^s, \lambda)\)\;
	\For{$n=1$ \KwTo $N_{opt}$}{
        % Select \( \bm{x}_1 \in X_\Omega^s \) and \( \bm{x}_2 \in X_{\partial \Omega}^s \), satisfying \eqref{equ:x1}, \eqref{equ:x2}\;
		Calculate the loss function $L(\theta_s)$ as in \eqref{equ:loss}\;
		Update the \(\theta_s\) by Adam with loss $L(\theta_s)$ and learning rate $\alpha$\;
	}
	Update the output layer coefficients $\bm{c}_s = \text{\scshape ColloLSQ}(\mathcal{L}, f_{s-1}^{res},\mathcal{B},g_{s-1}^{res}, \Phi_s, X_\Omega^s, X_{\partial\Omega}^s, \lambda)$\;
	\Return $\psi_{s}$
\end{algorithm}

\subsection{Adaptive Strategies}
\label{sec:adaptive}

As discussed in the previous section, SLFNs can achieve efficient training with low optimization errors. However, this efficiency often comes at the cost of reduced network expressiveness. Therefore, it becomes essential to adaptively initialize the hidden layer parameters to effectively capture the critical features of the target function. Moreover, the selection of training points plays an important role in maintaining both the efficiency and robustness of the training process.
In this section, we present adaptive strategies for both collocation point selection and parameter initialization to enhance the network performance.

\subsubsection{Adaptive Collocations}  
For smooth target functions with minimal variation, equidistant or uniform collocation is generally sufficient to achieve reliable approximations. However, in cases where the solution exhibits sharp gradients, singularities, or other localized features, adaptive sampling techniques based on the equation residual or gradient typically provide improved accuracy and stability.

In this work, we divide the internal collocation points into two sets at each stage \( s \geq 1 \). Two-thirds of these points, denoted as \( X_\Omega^{1} \), are sampled equidistantly to ensure comprehensive coverage of the domain. The remaining one-third, denoted as \( X_\Omega^{2,s} \), are selected adaptively through a rejection sampling process guided by the equation residual \( f_{s-1}^{res} \), as described in Algorithm \ref{alg:rejection}. By assigning a higher likelihood of selection to points with larger residuals, we focus on refining areas most in need. Boundary collocation points are sampled equidistantly throughout to ensure consistent enforcement of boundary conditions.

\begin{algorithm}[htbp]
	\caption{Rejection Sampling}  
	\label{alg:rejection}  
	\SetAlgoLined  
	\SetAlgoNoEnd  
	\KwData{Sampling distribution function \(D(\bm{x})\), number of points \(N\)}  
	\KwResult{Adaptive collocation points \(X\)}  

	Set \(M = \max_{\bm{x} \in \Omega} |D(\bm{x})|\)\;  
	\(X \gets \emptyset\)\;  
	\While{\(|X| < N\)}{  
		Sample a candidate point \(\bm{x}\) uniformly from \(\Omega\)\;  
		Sample a random number \(r\) uniformly from \([0, M]\)\;  
		\If{\(|D(\bm{x})| \geq r\)}{  
			\(X \gets X \cup \{\bm{x}\}\)\;  
		}  
	}  
	\Return \(X\)
\end{algorithm}

To handle sampling in complex geometrical domains, we employ an indicator function to determine whether a candidate point lies within the domain:  
\[
	\chi_\Omega(\bm{x}) = \begin{cases}
		1, & \bm{x} \in \Omega, \\  
		0, & \bm{x} \notin \Omega.  
	\end{cases} 
\]  
Initially, candidate points are uniformly generated within the bounding box that encloses the domain. The indicator function is then used in combination with rejection sampling to filter these points, ensuring that only those within the domain are retained. This approach allows for efficient handling of irregularly shaped or disconnected domains without requiring explicit parameterization of the boundaries. 

% Figure \ref{fig:collos} illustrates the distribution of equidistant and adaptively sampled points in the unit square domain. A small boundary skip is applied to prevent interior points from being too close to the boundary.

% \begin{figure}[htbp]
% 	\centering
% 	\includegraphics[width=0.63\textwidth]{}
% 	\caption{Equidistant points (left) and rejection-sampled points (right). The blue and orange points in the equidistant set represent the $15\times15$ interior points and $4\times19$ boundary points, respectively. The rejection-sampled points are illustrative examples obtained during the training process of the numerical experiment discussed in \Cref{subsubsec:biharmonic-smooth}.}
% 	\label{fig:collos}
% \end{figure}

\subsubsection{Adaptive Initialization}
\label{subsec:ada_init}
The initialization of network parameters is crucial for the convergence and accuracy of the solution. Both the weights and the biases determine the underlying approximating subspace of the SLFN. To enhance the network capacity to capture complex features, we propose an adaptive initialization strategy that leverages the equation residual to guide the selection of hidden layer parameters.

To achieve effective initialization, it is essential to understand how the parameters influence the network capacity. For an SLFN with fixed hidden layer parameters \((\bm{W},\bm{b})\), its approximation subspace consists of functions of the form:
\[
\phi_i(\bm{x}) = \sigma(\bm{w}_i \cdot \bm{x} + b_i), \quad i=1,2,\ldots,N.
\]
For the most commonly used activation functions, such as sigmoid, tanh, Gaussian, ReLU, etc., the weight \(\bm{w}_i\) controls the slope or frequency of \(\phi_i\), and \(\phi_i\) exhibits the most significant feature around the hyperplane:
\[
    \{\bm{x} \in \mathbb{R}^d : \bm{w}_i \cdot \bm{x} + b_i = 0\}.
\]
We refer to this hyperplane as the partition hyperplane of \(\phi_i\) following \cite{zhang_transferable_2024}. Both the frequency and location of the hyperplane are crucial for capturing the features of the target function.

Since weights are the only parameters that control the frequency of basis functions, we first consider the initialization of weights. Neural networks are known to exhibit spectral bias \cite{rahaman_spectral_2019}, or the frequency principle \cite{xu_frequency_2020}. By leveraging the iterative approximation process, we initialize the weights at each stage \(s\) using a scaling factor \(R_s\) to emphasize different frequency components of the target function. The scaling factors \(\{R_s\}\) form a non-decreasing sequence, with weights initialized as \(\bm{W}_s \sim \mathbb{U}(-R_s, R_s)\). This strategy enhances the ability of networks to represent solutions with varying spectral characteristics, effectively addressing challenges in multiscale phenomena.

With the weights fixed, we determine the biases \(b_{s,i}\), which influence the partition hyperplane of \(\phi_{s,i}(\bm{x})\). As in \cite{zhang_transferable_2024}, we define the hyperplane density at \(\bm{x}\) as the percentage of neurons whose partition hyperplane intersects the ball with center \(\bm{x}\) and radius \(\tau\):
\[
    D_\tau(\bm{x};\bm{W},\bm{b}) = \frac{1}{N} \sum_{i=1}^N \mathbb{I}\left(\frac{|\bm{w}_i \cdot \bm{x} + b_i|}{\|\bm{w}_i\|_2} \leq \tau\right).
\]
It is reasonable to consider that if the hyperplanes pass through the points where the target function is largest in magnitude, the network will have good approximation capability. Thus, we employ the rejection sampling algorithm to select a set of base points \(X_{base}^s=\{\bm{x}_{s,i}\}_{i=1}^{N_s}\) based on the equation residual \(f^{res}_{s-1}\). We then initialize the biases \(\bm{b}_s=(b_{s,1},b_{s,2},\cdots,b_{s,N_s})\) such that the partition hyperplane of \(\phi_{s,i}\) passes through the base points:
\begin{equation}
    \label{equ:init_b}
    b_{s,i} = -\bm{w}_{s,i} \cdot \bm{x}_{s,i}, \quad i=1,2,\ldots,N_s.
\end{equation}
This initialization enhances the expressiveness of the neural network at key base points, thereby improving its ability to capture local features and increasing approximation accuracy in regions with large residuals. To illustrate the effectiveness of this approach, we use \(|\!\sin x \cos y|\) as the target distribution. Figure \ref{fig:hyperplane_density} presents the resulting density distributions of the hyperplanes initialized using \eqref{equ:init_b}. The results demonstrate that the proposed method achieves an appropriately concentrated distribution.  

\begin{figure}[htbp]
	\centering
	\includegraphics[width=.63\textwidth]{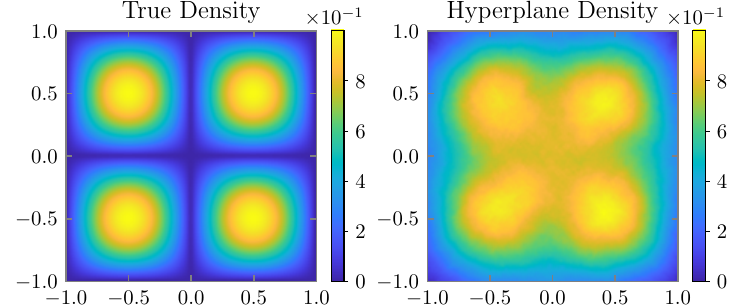}
	\caption{An example of biases adaptive initialization. Left: The target distribution \(|\!\sin x \cos y|\); Right: The density distribution of 20000 initialized hyperplanes computed with \(\tau = 0.1\).}
	\label{fig:hyperplane_density}
\end{figure}

In addition to parameter initialization, the selection of an appropriate network width \(N_s\) also plays a key role in ensuring algorithm convergence. Rather than discussing it here, we defer this topic to the next section, where the analysis provides essential guidance. A detailed strategy for choosing the network width can be found in \Cref{rem:width_choice}.
Finally, the complete adaptive initialization strategy is summarized in Algorithm \ref{alg:adaptive_init}.

\begin{algorithm}[htbp]
    \caption{Adaptive Initialization}
    \label{alg:adaptive_init}
    \SetAlgoLined
    \SetAlgoNoEnd
    \KwData{Equation residual \(f^{res}_{s-1}\), number of hidden neurons \(N_s\), scaling factor \(R_s\)}
    \KwResult{Hidden layer parameters \((\bm{W}_s,\bm{b}_s)\)}
    
    \SetKwProg{Fn}{Function}{:}{end} 
    \Fn{{\scshape AdapInit}\((f^{res}_{s-1}, N_s, R_s)\)}{
        Initialize the weights \(\bm{W}_s \sim \mathbb{U}\left(-R_s, R_s\right)\)\;
        Select base points \(X_{base}^s=\{\bm{x}_{s,i}\}_{i=1}^{N_s}\) by \emph{Rejection Sampling} (see Algorithm \ref{alg:rejection}) based on \(f^{res}_{s-1}\)\;
        Initialize the biases \(b_{s,i} = -\bm{w}_{s,i} \cdot \bm{x}_{s,i}\)\;
        \Return \((\bm{W}_s,\bm{b}_s)\)
    }
\end{algorithm}

\section{Numerical analysis}
\label{sec:analysis}

In this section, we present a convergence analysis for the proposed method. Specifically, we give the approximation error estimate. Furthermore, under appropriate optimization assumptions, we derive {\it a posteriori} error indicator to guide the termination of the iterative process.

To this end, we consider the linear boundary value problem \eqref{equ:linear_pde} and introduce the associated norm defined by 
\[
    \vertiii{u} = \left(\|\mathcal{L}u\|_{L^2(\Omega)}^2 + \|\lambda\mathcal{B}u\|_{L^2(\partial\Omega)}^2\right)^{\frac{1}{2}},
\]
where $\lambda \in \mathbb{R}^+$. 
Obviously, \(\vertiii{\cdot}\) is indeed a norm. The linearity property follows immediately. The positive definiteness is guaranteed by the existence and uniqueness of the PDE solution, while the triangle inequality is derived using the Cauchy-Schwarz inequality. Note that in the special case of fitting problems, which can be interpreted as PDEs with the identity operator \(\mathcal{L}\) and no boundary operator, the norm \(\vertiii{\cdot}\) is actually the \(L^2\) norm.

Next, we prove that the error at each stage decreases with respect to the norm \(\vertiii{\cdot}\). At stage \(s\), we project the exact solution \(u\) onto the subspace spanned by \(\Psi_s = \{\psi_i\}_{i=0}^s\) in the sense of the norm \(\vertiii{\cdot}\). Although our algorithm employs a discrete projection in the collocation sense, the analysis considers a continuous projection given by:
\[
    u_s^* = \underset{v\in\text{span}\Psi_s}{\arg\min} \vertiii{u-v} := P_{\Psi_s}u.
\]
The errors and the improvements between consecutive stages are defined as:
\begin{equation}
    \label{equ:error_variants}
    e_s^* = u - u_s^*, \quad v_s^* = u_{s}^* - u_{s-1}^*.
\end{equation}
By the properties of the projection, it is clear that \(\vertiii{e_{s+1}^*} \leq \vertiii{e_{s}^*}\). This suggests that the error cannot increase at each stage. A more detailed analysis shows that, except when the solutions at consecutive stages are identical, the error strictly decreases.  
The following proposition formally establishes this behavior.

\begin{proposition}
    \label{prop:converge}
    Consider the linear boundary value problem \eqref{equ:linear_pde} with a source term \(f \in L^2(\Omega)\) and boundary data \(g \in L^2(\partial\Omega)\). The error defined in \eqref{equ:error_variants} either strictly decreases at each stage \(s \in \mathbb{N}^+\), i.e., \(\vertiii{e_{s+1}^*} < \vertiii{e_{s}^*}\) or remains unchanged if and only if \(u_{s+1}^* = u_s^*\).
\end{proposition}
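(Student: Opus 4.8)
The plan is to recognize that \(\vertiii{\cdot}\) is induced by an inner product and then reduce the statement to the textbook orthogonal-projection argument over the nested subspaces \(\text{span}\,\Psi_s \subset \text{span}\,\Psi_{s+1}\). First I would introduce the bilinear form
\[
    \langle u, v\rangle = \langle \mathcal{L}u, \mathcal{L}v\rangle_{L^2(\Omega)} + \lambda^2\,\langle \mathcal{B}u, \mathcal{B}v\rangle_{L^2(\partial\Omega)},
\]
and verify that, by the linearity of \(\mathcal{L}\) and \(\mathcal{B}\) together with the positive definiteness already noted in the text, this is a genuine inner product whose induced norm is precisely \(\vertiii{\cdot}\). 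This recasts the minimization defining \(u_s^* = P_{\Psi_s}u\) as orthogonal projection onto a finite-dimensional (hence closed) subspace, so the projection exists, is unique, and is characterized by the orthogonality condition \(\langle u - u_s^*, w\rangle = 0\) for every \(w \in \text{span}\,\Psi_s\).

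Next I would exploit nestedness. Since \(\Psi_s \subset \Psi_{s+1}\), the improvement \(v_{s+1}^* = u_{s+1}^* - u_s^*\) lies in \(\text{span}\,\Psi_{s+1}\), whereas the residual error \(e_{s+1}^* = u - u_{s+1}^*\) is orthogonal to all of \(\text{span}\,\Psi_{s+1}\) by the characterization above. Hence \(\langle e_{s+1}^*, v_{s+1}^*\rangle = 0\). Writing the telescoping identity \(e_s^* = e_{s+1}^* + v_{s+1}^*\) and applying the Pythagorean theorem yields
\[
    \vertiii{e_s^*}^2 = \vertiii{e_{s+1}^*}^2 + \vertiii{v_{s+1}^*}^2.
\]

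Finally, I would read off the conclusion from this identity. Because \(\vertiii{v_{s+1}^*}^2 \geq 0\), we obtain \(\vertiii{e_{s+1}^*} \leq \vertiii{e_s^*}\); and by positive definiteness of the norm, equality holds exactly when \(\vertiii{v_{s+1}^*} = 0\), i.e.\ when \(v_{s+1}^* = 0\), i.e.\ when \(u_{s+1}^* = u_s^*\). In every other case the inequality is strict, which is precisely the claimed dichotomy.

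I do not expect a genuine obstacle here, since the core argument is the classical projection theorem. The only point deserving explicit care is establishing that \(\vertiii{\cdot}\) comes from an inner product and that each \(\text{span}\,\Psi_s\) is closed (immediate from finite dimensionality), so that the orthogonality characterization of the minimizer is legitimately available; once that is in place, the Pythagorean identity and the equality analysis are routine.
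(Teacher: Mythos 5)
Your proof is correct, and it rests on the same decomposition as the paper's: write \(e_s^* = e_{s+1}^* + v_{s+1}^*\), expand the square, and control the cross term. The difference is in how that cross term is handled. You make the inner-product structure of \(\vertiii{\cdot}\) explicit and invoke the orthogonality characterization of projection onto the finite-dimensional (hence closed) subspace \(\text{span}\,\Psi_{s+1}\), which annihilates the cross term exactly and yields the full Pythagorean identity \(\vertiii{e_s^*}^2 = \vertiii{e_{s+1}^*}^2 + \vertiii{v_{s+1}^*}^2\). The paper never states an orthogonality condition; it proves only the one-sided bound \(\langle \mathcal{L}e_{s+1}^*, \mathcal{L}v_{s+1}^*\rangle_{\Omega} + \lambda^2\langle \mathcal{B}e_{s+1}^*, \mathcal{B}v_{s+1}^*\rangle_{\partial\Omega} \geq 0\), by contradiction: were it negative, the rescaled element \((1-\eta)v_{s+1}^*\) with small \(\eta>0\) would beat \(v_{s+1}^*\) as minimizer of \(\vertiii{e_s^* - v'}\) over \(v'\in\text{span}\,\Psi_{s+1}\). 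That scaling perturbation is in effect half of the proof of the projection theorem, specialized to the single direction \(v_{s+1}^*\). Both arguments close the same way, since the dichotomy already follows from the inequality \(\vertiii{e_s^*}^2 \geq \vertiii{e_{s+1}^*}^2 + \vertiii{v_{s+1}^*}^2\) together with positive definiteness. Your route is the cleaner textbook one and gives the stronger exact identity (which also explains why \(\vertiii{v_{s+1}^*}\) is a natural error indicator, anticipating the later a posteriori estimate); the paper's variational argument is more self-contained and has the minor advantage of surviving in settings where the admissible set is merely convex rather than a linear subspace, where only the variational inequality, not orthogonality, is available.
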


\begin{proof}
   Direct calculation gives 
    \[
        \label{equ:es_expand}
        \vertiii{e_s^*}^2 = \vertiii{e_{s+1}^*}^2 + \vertiii{v_{s+1}^*}^2 + 2\left(\langle \mathcal{L}e_{s+1}^*, \mathcal{L}v_{s+1}^*\rangle_\Omega + \lambda^2\langle\mathcal{B}e_{s+1}^*, \mathcal{B}v_{s+1}^*\rangle_{\partial\Omega}\right).
    \]
    We claim that
    \begin{equation}
        \label{equ:inequality}
        \langle \mathcal{L}e_{s+1}^*, \mathcal{L}v_{s+1}^*\rangle_{\Omega} + \lambda^2\langle\mathcal{B}e_{s+1}^*, \mathcal{B}v_{s+1}^*\rangle_{\partial\Omega} \geq 0.
    \end{equation}
    Assuming \eqref{equ:inequality} holds, it follows that
    \[
        \vertiii{e_s^*}^2 \geq \vertiii{e_{s+1}^*}^2 + \vertiii{v_{s+1}^*}^2 \geq \vertiii{e_{s+1}^*}^2.
    \]
    The equality holds if and only if \(\vertiii{v_{s+1}^*} = 0\), which implies \(v_{s+1}^* = u_{s+1}^* - u_{s}^* = 0\).

    It remains to prove \eqref{equ:inequality}. Suppose, for the sake of contradiction, that \eqref{equ:inequality} does not hold. Define \(v_{s+1}^\eta = (1-\eta)v_{s+1}^* \in \text{span}\Psi_{s+1}\). Then, we have
    \[
        \begin{aligned}
            \vertiii{e_s^*-v_{s+1}^\eta}^2 =& \vertiii{e_{s}^*-v_{s+1}^*}^2 + \eta\big(\eta\vertiii{v_{s+1}^*}^2 \\
            &+ 2\langle \mathcal{L}v_{s+1}^*,\mathcal{L}e_{s+1}^*\rangle_{\Omega}+ 2\lambda^2\langle \mathcal{B}v_{s+1}^*,\mathcal{B}e_{s+1}^*\rangle_{\partial\Omega}\big).
        \end{aligned}
    \]
    By choosing \(\eta\) such that
    \[
        0 < \eta < -\frac{2\left(\langle \mathcal{L}v_{s+1}^*, \mathcal{L}e_{s+1}^*\rangle_{\Omega} + \lambda^2\langle \mathcal{B}v_{s+1}^*, \mathcal{B}e_{s+1}^*\rangle_{\partial\Omega}\right)}{\vertiii{v_{s+1}^*}^2},
    \]
    we would obtain \(\vertiii{e_s^* - v_{s+1}^\eta}^2 < \vertiii{e_s^* - v_{s+1}^*}^2\), contradicting the definition of \(v_{s+1}^*\) as the minimizer 
    \begin{equation}
        \label{equ:v_def}
        \begin{aligned}
            v_{s+1}^* &= u_{s+1}^*-u_s^*
            =\underset{v\in\text{span}\Psi_{s+1}}{\arg\min} \vertiii{u-v}-u_s^*\\
            &=\underset{v^\prime\in\text{span}\Psi_{s+1}}{\arg\min} \vertiii{u-(u_s^*+v^\prime)}+u_s^*-u_s^*\\
            &=\underset{v^\prime\in\text{span}\Psi_{s+1}}{\arg\min} \vertiii{e_s^*-v^\prime}.
        \end{aligned}
    \end{equation}
    Therefore, \eqref{equ:inequality} must hold.
\end{proof}

\Cref{prop:converge} establishes the fundamental \emph{qualitative} convergence property of Algorithm \ref{alg:entire}. In fact, if we further assume that at the $s$-th stage, the trained neural network is sufficiently good to the residual $e_s$, we can derive a more delicate \emph{quantitative} error estimate regarding the approximation error. To this end, let us consider the space of SLFNs defined by 
\[
    V_N^\sigma = \left\{ f_{NN}:\mathbb{R}^d \to \mathbb{R} \ \middle|\ f_{NN}(\bm{x}) = \sum_{i=1}^N c_i\,\sigma\big(\bm{w}_i \cdot \bm{x} + b_i\big)\right\}.
\]
The universal approximation property of SLFN has been established in \cite{hornik_approximation_1991} and is stated as follows.

\begin{theorem}[Universal Approximation \cite{hornik_approximation_1991}]
    Suppose \(1 \leq p < \infty, 0 \leq s < \infty\), and \(\Omega \subset \mathbb{R}^d\) is compact. If \(\sigma \in C^s(\Omega)\) is non-constant and bounded, then the space 
    \[
    V^\sigma := \bigcup_{N=1}^\infty V_N^\sigma
    \]
    is dense in the Sobolev space
    \[
        W^{s, p}(\Omega) := \left\{v \in L^p(\Omega): D^\alpha v \in L^p(\Omega), \ \forall |\alpha| \leq s \right\}.
    \]
\end{theorem}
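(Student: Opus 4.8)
The plan is to prove density by a two–stage reduction: first cut the multivariate Sobolev approximation problem down to a one–variable problem, and then settle that one–variable problem using only the smoothness and non-polynomiality of \(\sigma\). First I would recall that, after a standard extension/approximation step on the compact set \(\Omega\), it suffices to approximate smooth functions, and these are in turn approximated in \(C^s\) (hence in \(W^{s,p}(\Omega)\)) by trigonometric polynomials, i.e.\ finite combinations of \(\bm{x}\mapsto\cos(\bm{a}\cdot\bm{x})\) and \(\bm{x}\mapsto\sin(\bm{a}\cdot\bm{x})\). So it is enough to approximate each such building block by elements of \(V^\sigma\). The key observation is that each block is a \emph{ridge function}: it has the form \(g(\bm{a}\cdot\bm{x})\) with a fixed one-dimensional profile \(g\in\{\cos,\sin\}\).

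Because \(\Omega\) is compact, \(t=\bm{a}\cdot\bm{x}\) ranges over a compact interval \(I_{\bm{a}}\subset\mathbb{R}\); if I can approximate \(g\) on \(I_{\bm{a}}\) in the \(C^s(I_{\bm{a}})\) norm by a sum \(\sum_j c_j\,\sigma(\lambda_j t+b_j)\), then composing with \(\bm{x}\mapsto\bm{a}\cdot\bm{x}\) yields \(\sum_j c_j\,\sigma(\lambda_j\bm{a}\cdot\bm{x}+b_j)\in V^\sigma\) (with weights \(\bm{w}_j=\lambda_j\bm{a}\)), and the chain rule together with the boundedness of \(\bm{a}\) turns \(C^s\) closeness of the profiles into \(W^{s,p}(\Omega)\) closeness of the compositions. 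This reduces everything to the one-dimensional claim
\[
\overline{\mathrm{span}}\{\,t\mapsto\sigma(\lambda t+b):\lambda,b\in\mathbb{R}\,\}=C^s(I)\quad\text{for every compact interval }I.
\]
To establish it I would use mollification to manufacture smooth members of the closure. For \(\phi\in C_c^\infty(\mathbb{R})\) the convolution \((\sigma*\phi)(t)=\int\sigma(t-y)\phi(y)\,dy\) is a \(C^s(I)\)-limit of Riemann sums \(\sum_i\sigma(t-y_i)\phi(y_i)\,\Delta y_i\), each term a shift \(\sigma(1\cdot t-y_i)\); since \(\sigma\in C^s\) and \(\phi\) has compact support these sums converge together with all \(t\)-derivatives up to order \(s\), so \(\sigma*\phi\), and more generally \(t\mapsto(\sigma*\phi)(\lambda t+b)\) for each fixed \(\lambda,b\), lies in the closure. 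As \(\sigma\) is bounded and non-constant it is not a polynomial, and \(\phi\) can be chosen so that \(\sigma_\phi:=\sigma*\phi\in C^\infty\) is again non-polynomial. Differentiating \(\lambda\mapsto\sigma_\phi(\lambda t+b)\) gives \(\partial_\lambda^k\sigma_\phi(\lambda t+b)=t^k\sigma_\phi^{(k)}(\lambda t+b)\), and the divided differences in \(\lambda\) (still in the closure) converge in \(C^s(I)\) because \(\sigma_\phi\) is \(C^\infty\); evaluating at \(\lambda=0\) and at a point \(b_k\) with \(\sigma_\phi^{(k)}(b_k)\neq0\) — which exists for every \(k\), since \(\sigma_\phi^{(k)}\equiv0\) would force \(\sigma_\phi\) to be a polynomial — shows every monomial \(t^k\), hence every polynomial, lies in the closure. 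Since polynomials are dense in \(C^s(I)\) and \(\cos,\sin\in C^s(I)\), the one-dimensional claim follows and the argument closes.

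The main obstacle, and the only place where the hypotheses on \(\sigma\) are genuinely used, is this one-dimensional step: passing from ``\(\sigma\) is non-polynomial'' to ``the closure contains polynomials of \emph{every} degree,'' and doing so in the \(C^s\) norm rather than merely the uniform norm. Differentiating \(\sigma(\lambda t+b)\) in \(\lambda\) directly would demand far more regularity than \(C^s\) (controlling \(s\) derivatives in \(t\) of the \(k\)-th \(\lambda\)-derivative would need \(\sigma\in C^{s+k}\)), so the role of mollification is precisely to trade the limited regularity of \(\sigma\) for the \(C^\infty\) regularity of \(\sigma_\phi\) while remaining inside the closure, after which the non-constant–bounded hypothesis supplies non-polynomiality. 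An alternative, more functional-analytic route — Hornik's original one — argues by duality: if a finite signed measure (or, for the Sobolev pairing, a distribution of order \(s\)) annihilates every \(\sigma(\bm{w}\cdot\bm{x}+b)\), one shows via the distributional Fourier transform \(\widehat\sigma\) (nonvanishing away from the origin because \(\sigma\) is bounded and non-constant) that the functional must vanish; there the same non-polynomiality reappears as the non-triviality of \(\widehat\sigma\). Either way, the technical heart is the interaction between the regularity class \(C^s\) and the non-polynomiality of \(\sigma\).
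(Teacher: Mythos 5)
The paper itself offers no proof of this theorem: it is quoted as a known result of Hornik \cite{hornik_approximation_1991}, so the only meaningful comparison is with Hornik's original argument. Measured against that, your proof is correct in outline but follows a genuinely different route: it is the Leshno--Lin--Pinkus--Schocken/Pinkus style of argument (reduce to ridge profiles via trigonometric polynomials, mollify the activation to trade its limited $C^s$ regularity for the $C^\infty$ regularity of $\sigma*\phi$ while staying inside the closure, then use divided differences in the scalar weight $\lambda$ to extract every monomial $t^k$ from non-polynomiality), carried out here in the $C^s$/$W^{s,p}$ topology rather than the uniform norm. Hornik instead works by duality and Fourier analysis: an annihilating functional is shown to vanish using the nontriviality of the distributional Fourier transform of a bounded non-constant $\sigma$ --- a distinction you correctly identify in your closing paragraph. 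Your route is more elementary and makes completely transparent where ``non-constant and bounded'' enters (bounded and non-constant $\Rightarrow$ not a polynomial $\Rightarrow$ all monomials lie in the closure); the duality route generalizes more readily to density with respect to arbitrary compactly supported finite measures, which is the form in which Hornik actually states his results.

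Two steps in your sketch should be justified rather than asserted. First, the opening reduction --- that it suffices to approximate functions smooth up to the boundary, and hence trigonometric polynomials, in $W^{s,p}(\Omega)$ --- can fail for an arbitrary compact $\Omega$ when $s \geq 1$ (e.g., slit or cusp domains); one needs a mild regularity hypothesis such as the segment property, which Hornik's Sobolev statements also carry. This is really an imprecision in the theorem as transcribed in the paper, but your proof inherits it. Second, the existence of a test function $\phi$ with $\sigma*\phi$ non-polynomial requires the standard Baire-category lemma: if $\sigma*\phi$ were a polynomial for every $\phi \in C_c^\infty(\mathbb{R})$, a Baire argument bounds the degrees uniformly and forces $\sigma$ to agree a.e.\ (hence, by continuity, everywhere) with a polynomial, which a bounded non-constant function cannot be. Both facts are standard and citable; with them supplied, your argument is complete.
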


In each stage \(s>0\), for any \(\tau_s > 0\), the above universal approximation theorem indicates that there exists a network size \(n(\tau_s, u_{s-1}^*)\) such that for \(n \geq n(\tau_s, u_{s-1}^*)\), one can find \(\psi_{s} \in V_n^\sigma\) satisfying  
\begin{equation}
    \label{equ:approx}
    \frac{\vertiii{e_{s-1}^*-\psi_{s}}}{\vertiii{e_{s-1}^*}} \leq \tau_s.
\end{equation}
Assume that sufficient training has been performed such that the relative error bound \eqref{equ:approx} holds. Then we can establish the following proposition for the {\it a posteriori} error estimate.

\begin{proposition}
\label{prop:posteriori}
   At stage \(s>0\), given \(0<\tau_s<1\), if  \eqref{equ:approx} holds, then
    \begin{equation}
        \label{equ:posteriori}
        \frac{1}{1 + \tau_s} \vertiii{v_{s}^*} \leq \vertiii{u - u_{s-1}^*} \leq \frac{1}{1 - \tau_s} \vertiii{v_{s}^*}.
    \end{equation}
\end{proposition}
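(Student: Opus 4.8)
The plan is to relate the observable stagewise correction $\vertiii{v_s^*}$ to the true error $\vertiii{u - u_{s-1}^*} = \vertiii{e_{s-1}^*}$ by exploiting the optimality characterization of $v_s^*$ recorded in \eqref{equ:v_def}, namely that $v_s^*$ is the best approximation of $e_{s-1}^*$ from the subspace $\text{span}\Psi_s$ in the norm $\vertiii{\cdot}$. The crucial observation I would start from is that the newly constructed basis function $\psi_s$ itself lies in $\text{span}\Psi_s$, since $\Psi_s = \{\psi_i\}_{i=0}^s$ contains $\psi_s$. Hence $\psi_s$ is one admissible competitor in the minimization defining $v_s^*$, so the best approximation can only do better:
\[
    \vertiii{e_{s-1}^* - v_s^*} \leq \vertiii{e_{s-1}^* - \psi_s}.
\]
Combining this with the training assumption \eqref{equ:approx} immediately gives $\vertiii{e_{s-1}^* - v_s^*} \leq \tau_s \vertiii{e_{s-1}^*}$, which is the single inequality from which both sides of \eqref{equ:posteriori} will follow.

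Next I would derive the two bounds by two applications of the triangle inequality for $\vertiii{\cdot}$, valid since it is a norm as noted before the proposition. For the upper bound, grouping $e_{s-1}^* = v_s^* + (e_{s-1}^* - v_s^*)$ yields $\vertiii{e_{s-1}^*} \leq \vertiii{v_s^*} + \tau_s\vertiii{e_{s-1}^*}$; because $0 < \tau_s < 1$ one may absorb the last term to conclude $\vertiii{e_{s-1}^*} \leq (1-\tau_s)^{-1}\vertiii{v_s^*}$. For the lower bound, the reverse grouping $v_s^* = e_{s-1}^* - (e_{s-1}^* - v_s^*)$ gives $\vertiii{v_s^*} \leq (1+\tau_s)\vertiii{e_{s-1}^*}$, that is, $(1+\tau_s)^{-1}\vertiii{v_s^*} \leq \vertiii{e_{s-1}^*}$. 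Recalling $\vertiii{e_{s-1}^*} = \vertiii{u - u_{s-1}^*}$ then assembles exactly \eqref{equ:posteriori}.

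There is essentially no serious obstacle here; the whole argument is a short consequence of the optimality of the projection together with the triangle inequality. The only point requiring explicit care is the subspace containment $\psi_s \in \text{span}\Psi_s$, since it is precisely this that upgrades the single-basis approximation guarantee \eqref{equ:approx} into a bound on the full projection residual $\vertiii{e_{s-1}^* - v_s^*}$. I would also note that the hypothesis $\tau_s < 1$ enters only in the upper bound, to keep $1-\tau_s > 0$; this is consistent with the intended reading of \eqref{equ:posteriori}, namely that once the computable correction $\vertiii{v_s^*}$ is small the two-sided estimate certifies it as a reliable surrogate for the unknown error, thereby justifying its use as an \emph{a posteriori} stopping indicator.
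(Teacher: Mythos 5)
Your proposal is correct and follows essentially the same route as the paper: both use $\psi_s$ as an admissible competitor in the minimization \eqref{equ:v_def} to get $\vertiii{e_{s-1}^* - v_s^*} \leq \tau_s \vertiii{e_{s-1}^*}$, and then extract the two bounds of \eqref{equ:posteriori}. The only cosmetic difference is that the paper packages the two triangle-inequality groupings into a single reverse triangle inequality $\left|\vertiii{u - u_{s-1}^*} - \vertiii{v_s^*}\right| \leq \vertiii{u - u_{s-1}^* - v_s^*}$, whereas you write them out separately.
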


\begin{proof}
    Selecting \(v^\prime=\psi_{s}\in\Psi_{s}\) in \eqref{equ:v_def} and utilizing \eqref{equ:approx}, we obtain  
    \[
    \begin{aligned}
        \left| \vertiii{u - u_{s-1}^*} - \vertiii{v_{s}^*} \right| \leq \vertiii{u - u_{s-1}^* - v_{s}^*} \leq \vertiii{e_{s-1}^* - \psi_{s}} \leq \tau_s\vertiii{e_{s-1}^*}.
    \end{aligned}
    \]
    Thus, we obtain the following bounds:  
    \[
        \vertiii{u - u_{s-1}^*} \leq \vertiii{v_{s}^*} + \tau_s \vertiii{u - u_{s-1}^*},
    \]
    and  
    \[
        \vertiii{u - u_{s-1}^*} \geq \vertiii{v_{s}^*} - \tau_s \vertiii{u - u_{s-1}^*}.
    \]
    Combining these inequalities yields \eqref{equ:posteriori}.
\end{proof}

\Cref{prop:posteriori} demonstrates that \(\vertiii{v_s}\) serves as the {\it a posteriori} error estimator to guide the stopping criterion. Moreover, the proposed method allows for training a new network to refine the current approximate solution if \(\vertiii{v_s}\) exceeds the prescribed tolerance, without requiring the retraining of previous networks.

\begin{proposition}
    \label{prop:exp_converge}
    Let \(S \geq 1\). If \(\left\{\tau_s\right\}_{s=1}^S\) and \(\left\{n\left(\tau_s, u_{s-1}^*\right)\right\}_{s=1}^S\) are chosen such that \eqref{equ:approx} holds, then
    \begin{equation}
    \label{equ:converge}
    \vertiii{u-u_S^*}\leq \vertiii{u-u_0^*} \cdot \prod_{s=1}^{S} \min \left\{1,2 \tau_s\right\}.
    \end{equation}
\end{proposition}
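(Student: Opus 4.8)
The plan is to reduce the global estimate \eqref{equ:converge} to a per-stage contraction of the form $\vertiii{e_s^*}\le \min\{1,2\tau_s\}\,\vertiii{e_{s-1}^*}$ and then telescope. The natural tool is the inner product associated with the triple-bar norm, namely $\langle v,w\rangle_*:=\langle\mathcal{L}v,\mathcal{L}w\rangle_\Omega+\lambda^2\langle\mathcal{B}v,\mathcal{B}w\rangle_{\partial\Omega}$, for which $\vertiii{v}^2=\langle v,v\rangle_*$ and $u_s^*=P_{\Psi_s}u$ is the $\langle\cdot,\cdot\rangle_*$-orthogonal projection onto $\text{span}\,\Psi_s$. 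Because $\text{span}\,\Psi_{s-1}\subseteq\text{span}\,\Psi_s$, the projections are nested, which is exactly what makes both the monotonicity and the contraction available at each stage.

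For the per-stage bound I would combine two facts. First, the trivial factor: \Cref{prop:converge} (equivalently, the Pythagorean identity $\vertiii{e_{s-1}^*}^2=\vertiii{e_s^*}^2+\vertiii{v_s^*}^2$ arising from $e_s^*\perp_*\text{span}\,\Psi_s$ and $v_s^*\in\text{span}\,\Psi_s$) gives $\vertiii{e_s^*}\le\vertiii{e_{s-1}^*}$. Second, the contraction factor: since $u_{s-1}^*\in\text{span}\,\Psi_{s-1}$ and $\psi_s\in\Psi_s$, the element $u_{s-1}^*+\psi_s$ lies in $\text{span}\,\Psi_s$, so minimality of the projection $u_s^*$ yields $\vertiii{e_s^*}=\vertiii{u-u_s^*}\le\vertiii{u-(u_{s-1}^*+\psi_s)}=\vertiii{e_{s-1}^*-\psi_s}$. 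Invoking the approximation hypothesis \eqref{equ:approx} then gives $\vertiii{e_s^*}\le\tau_s\vertiii{e_{s-1}^*}$. Taking the better of the two bounds produces $\vertiii{e_s^*}\le\min\{1,\tau_s\}\,\vertiii{e_{s-1}^*}\le\min\{1,2\tau_s\}\,\vertiii{e_{s-1}^*}$, the last inequality giving exactly the factor appearing in \eqref{equ:converge} (with room to spare).

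Finally I would iterate this contraction from $s=1$ to $S$. Writing $e_0^*=u-u_0^*$ and chaining the per-stage estimates telescopes to $\vertiii{u-u_S^*}=\vertiii{e_S^*}\le\prod_{s=1}^{S}\min\{1,2\tau_s\}\cdot\vertiii{u-u_0^*}$, which is precisely \eqref{equ:converge}. The geometric-convergence reading then follows by choosing each $\tau_s\le\tau<\tfrac12$, so that every factor is $\le 2\tau<1$.

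The only genuinely delicate point is the consistency needed to apply minimality: I must ensure that the $\psi_s$ supplied by the universal approximation theorem — the one for which \eqref{equ:approx} is assumed to hold — is exactly the basis function appended to $\Psi_{s-1}$, so that $u_{s-1}^*+\psi_s\in\text{span}\,\Psi_s$ and the bound on $\vertiii{e_{s-1}^*-\psi_s}$ transfers verbatim to the projection error $\vertiii{e_s^*}$. This is where the standing hypothesis that $\{\tau_s\}$ and $\{n(\tau_s,u_{s-1}^*)\}$ are chosen so that \eqref{equ:approx} holds does the essential work. Everything else is bookkeeping: the orthogonality $e_s^*\perp_*\text{span}\,\Psi_s$ underlying the Pythagorean identity is immediate from the definition of $P_{\Psi_s}$ together with \eqref{equ:v_def}, and the passage $\min\{1,\tau_s\}\le\min\{1,2\tau_s\}$ merely discards the sharper constant in favor of the stated one.
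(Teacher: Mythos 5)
Your proof is correct, and it in fact yields a sharper per-stage constant than the paper's own argument. Both proofs share the same skeleton: obtain a per-stage contraction by inserting an explicit competitor from \(\text{span}\,\Psi_s\) into the minimization defining \(u_s^*\), then telescope over \(s\). The difference is the competitor. You test against \(u_{s-1}^*+\psi_s\), so minimality immediately gives \(\vertiii{e_s^*}\le\vertiii{e_{s-1}^*-\psi_s}\le\tau_s\vertiii{e_{s-1}^*}\); this is exactly the device the paper uses in the proof of \Cref{prop:posteriori}, transplanted to \Cref{prop:exp_converge}. The paper's proof of \Cref{prop:exp_converge} instead tests against \(u_{s-1}^*+\bigl(\vertiii{e_{s-1}^*}/\vertiii{\psi_s}\bigr)\psi_s\), i.e.\ it rescales \(\psi_s\) to match the norm of the current error, and then needs an additional triangle-inequality step to bound the discrepancy between the two normalized directions; that step costs a factor of \(2\) and yields only \(\vertiii{e_s^*}\le 2\tau_s\vertiii{e_{s-1}^*}\). (This normalization mirrors the Galerkin neural network analysis the paper cites, where basis functions carry unit norm and only their direction is controlled; in the present setting it is unnecessary.) Consequently your bound \(\vertiii{e_s^*}\le\min\{1,\tau_s\}\,\vertiii{e_{s-1}^*}\) strictly improves the stated factor, and the relaxation \(\tau_s\le 2\tau_s\) recovers \eqref{equ:converge} verbatim. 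Your supporting points are also sound: the monotonicity/Pythagorean step is available either via \Cref{prop:converge} or via orthogonality of \(e_s^*\) to \(\text{span}\,\Psi_s\) (the triple-bar norm is induced by an inner product and the approximating set is a linear subspace), and the consistency issue you flag --- that the \(\psi_s\) appearing in \eqref{equ:approx} must be the basis function actually appended at stage \(s\) --- is precisely how the paper's standing assumption is intended to be read.
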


\begin{proof}
    For each stage \(s>0\), it is obvious that $\vertiii{u-u_{s}^*}\leq \vertiii{u-u_{s-1}^*}$. In addition, we observe that \(u_{s-1}^*+\vertiii{u-u_{s-1}^*}/\vertiii{\psi_{s}}\cdot\psi_{s}\in\Psi_{s}\). According to the definition of \(u_{s}^*\), we have
    \[
    \label{equ:error_reduce}
    \begin{aligned}
        \vertiii{u-u_{s}^*}&\leq\vertiii{u-\left(u_{s-1}^*+\frac{\vertiii{u-u_{s-1}^*}}{\vertiii{\psi_{s}}}\cdot\psi_{s}\right)}\\
        &=\vertiii{u-u_{s-1}^*}\vertiii{\frac{u-u_{s-1}^*}{\vertiii{u-u_{s-1}^*}}-\frac{\psi_{s}}{\vertiii{\psi_{s}}}}\\
        &< 2\tau_{s}\vertiii{u-u_{s-1}^*}.
    \end{aligned}
    \]
    The final step follows thanks to
    \[
    \label{equ:error_assumption}
    \begin{aligned}
        \vertiii{\frac{e_{s-1}^*}{\vertiii{e_{s-1}^*}} - \frac{\psi_{s}}{\vertiii{\psi_{s}}}} &\leq \vertiii{\frac{e_{s-1}^*}{\vertiii{e_{s-1}^*}} - \frac{\psi_{s}}{\vertiii{e_{s-1}^*}}} + \vertiii{\frac{\psi_{s}}{\vertiii{e_{s-1}^*}} - \frac{\psi_{s}}{\vertiii{\psi_{s}}}}\\
        &=\frac{\vertiii{e_{s-1}^*-\psi_{s}}}{\vertiii{e_{s-1}^*}} + \frac{\left|\vertiii{\psi_{s}}-\vertiii{e_{s-1}^*}\right|}{\vertiii{e_{s-1}^*}\vertiii{\psi_{s}}}\cdot\vertiii{\psi_{s}}
        \leq 2\tau_s.
    \end{aligned}
    \]
    Thus, we have 
    $$\vertiii{u-u_s^*}\leq \vertiii{u-u_{s-1}^*} \cdot \min \left\{1,2 \tau_s\right\}. $$
    Then, the estimate \eqref{equ:converge} follows.
\end{proof}

\Cref{prop:exp_converge} highlights the convergence behavior of Algorithm \ref{alg:entire}. Assuming that \eqref{equ:approx} holds, the algorithm demonstrates favorable convergence characteristics. Furthermore, if \(\tau_s=\tau<1/2\) for all \(s\), the algorithm achieves geometric convergence.

\begin{remark}
    \label{rem:width_choice}
    The selection of the network width plays an important role in balancing convergence speed and computational efficiency. Due to its iterative nature, Algorithm \ref{alg:entire} allows flexibility to vary the network width \(N_s\) at each stage. Although in theory a fixed width \(N_s \equiv N\) can still achieve convergence by performing sufficient stages~\cite[Prop 2.12]{ainsworth_galerkin_2021}, it actually results in slow convergence in practice. The primary reason is that a fixed-width network has limited capacity to capture the higher-resolution features of the residual.  
    \Cref{prop:exp_converge} further suggests how this stagnation can be avoided. Observe that if \(\tau_s = \tau < 1/2\) for all \(s\), then the sequence \(\left\{n(\tau, u_{s-1}^*)\right\}_{s=1}^S\) is expected to be non-decreasing. In particular, the error for \(u_s^*\) is reduced by a factor of \(2 \tau\) compared to the error for \(u_{s-1}^*\). To ensure that \(\psi_s\) can capture the higher-resolution features of \(u - u_{s-1}^*\) (as compared to the relatively lower-resolution features of \(u - u_{s-2}^*\)), it is necessary that \(n(\tau, u_{s-1}^*) > n(\tau, u_{s-2}^*)\). Therefore, to achieve geometric convergence, the network width \(N_s\) must be increased progressively as \(s\) grows. In the experiments, following the setting in \cite{ainsworth_galerkin_2021}, we set the network width as \(N_s = N_0 \times 2^s\), where \(N_0\in \mathbb{Z}^+\) is a predefined width.
\end{remark}

\section{Numerical Results}
\label{sec:numerical-results}

We apply the proposed solver to several different classes of problems in one and two dimensions, demonstrating the effectiveness of our algorithm. If not mentioned, the prescribed parameters are listed in the Appendix.

For all subsequent examples, if not specified, we set \(u_0 = 0\) and use the \(\tanh\) activation function. The error metrics used in the numerical experiments are the \(L^\infty\) and \(L^2\) norms, both of which are approximated on fine equidistant grids. Specifically, a grid of 1000 points is used for the 1D case, while a \(300^2\) grid is employed for the 2D case. Let the set of these grid points be denoted by \(X\). The errors are then computed as follows:
\[
\begin{aligned}
    E_{L^\infty} &= \max_{\bm{x} \in X} |u(\bm{x}) - u_S(\bm{x})|, \\  
    E_{L^2} &= \sqrt{\frac{|\Omega|}{|X|} \sum_{\bm{x} \in X} |u(\bm{x}) - u_S(\bm{x})|^2},  
\end{aligned}
\]  
where \(u(\bm{x})\) is the exact solution and \(u_S(\bm{x})\) is the numerical solution.  

For circular domains, the uniform grid is generated in polar coordinates. The \(L^\infty\) error calculation remains unchanged, whereas the \(L^2\) error formula is adjusted to account for the polar grid distribution:
\[
E_{L^2} = \sqrt{\frac{P(\Omega)}{|X|} \sum_{\bm{x} \in X} \|\bm{x}\||u(\bm{x}) - u_S(\bm{x})|^2},  
\]  
where \(P(\Omega)\) denotes the perimeter of the circular domain.

\subsection{Function Fitting}
\label{subsec:function_fitting}

We begin by considering a function fitting problem. The target function $f$ is composed of the first four terms of the Fourier series for a square wave:
\[
    f(x) = \sin(x) + \frac{1}{3} \sin(3 \pi x) + \frac{1}{5} \sin(5 \pi x) + \frac{1}{7} \sin(7 \pi x).
    \label{equ:funcfit}
\]
%Obviously, we have the exact solution $u = f$. 

We present the true and {\it a posteriori} errors of stage 2,4,6, along with the convergence result of $L^2$ and $L^\infty$ errors with respect to the number of stages in \Cref{fig:funcfit}. Initially, the low-frequency components of the error are learned, with subsequent stages focusing on the high-frequency error components. The error curves demonstrate that the proposed method has exponential convergence, achieving an $L^\infty$ error of 1.689e{-11} and an $L^2$ error of 3.652e{-12} after seven stages.
\begin{figure}[htbp]
    \centering
    \includegraphics[width=\textwidth]{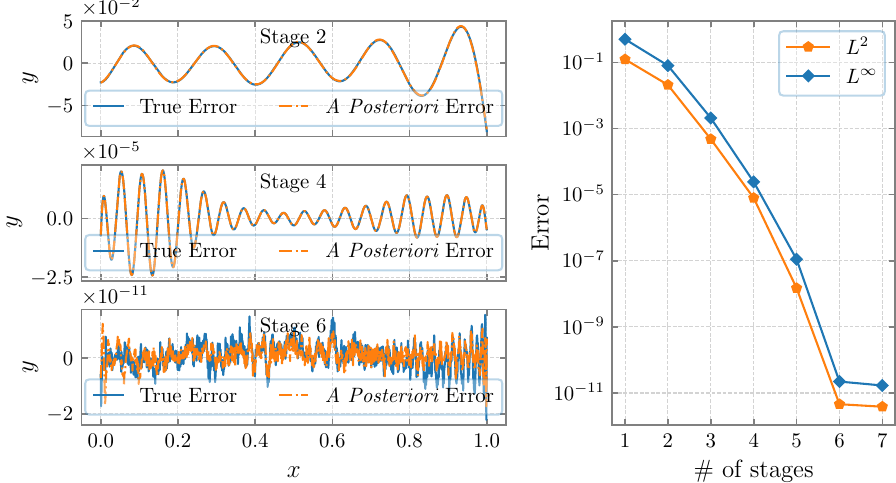}
    \caption{Results for Subsection \ref{subsec:function_fitting}. Left: the true and {\it a posteriori} errors of stage 2, 4, and 6. Right: Error vs. \# of stages.}
    \label{fig:funcfit}
\end{figure}

\subsection{Advection-diffusion problem with Boundary Layer}  
\label{subsec:boundary-layer}  

We now consider the following one-dimensional convection-diffusion equation with a boundary layer:  
\[  
\begin{aligned}  
-\varepsilon u^{\prime \prime} + b u^{\prime} &= f, \quad x \in \Omega = (-1, 1), \\  
u &= 0, \quad x \in \partial \Omega.  
\end{aligned}  
\]  
The solution to this problem consists of a smooth component and a boundary layer term of the form \(e^{\frac{b}{\varepsilon}(x-1)}\). For instance, when \(f \equiv 1\), the exact solution is given by
\[  
u(x) = \frac{x}{b} + \frac{1}{b} \left( \frac{e^{-\frac{2b}{\varepsilon}} + 1}{e^{-\frac{2b}{\varepsilon}} - 1} \right) \left( \frac{2e^{\frac{b}{\varepsilon}(x-1)}}{e^{-\frac{2b}{\varepsilon}} + 1} - 1 \right).  
\]  
This example focuses on the difficulty in accurately resolving the boundary layer, particularly for small values of \(\varepsilon\). For numerical testing, we set \(\varepsilon = 10^{-2}\) and \(b = -1\).  

\Cref{fig:1DBoundary_layer_approx_error} presents the approximate solution and the absolute error for the final stage. 
\begin{figure}[htbp]
    \centering
    \includegraphics[width=.9\textwidth]{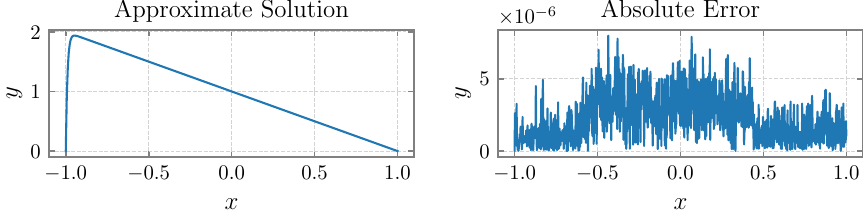}
    \caption{Approximation solution and the corresponding absolute error for Subsection \ref{subsec:boundary-layer}.}
    \label{fig:1DBoundary_layer_approx_error}
\end{figure}
\Cref{fig:1DBoundary_layer} shows the training result of stages \(s=2,3,4\) and \(L^2, L^\infty\) errors at each stage of the iteration. From the error curves, we observe that the boundary layer is initially not well captured. However, with successive corrections, our method successfully captures the boundary layer with high accuracy.

\begin{figure}[htbp]
    \centering
    \includegraphics[width=\textwidth]{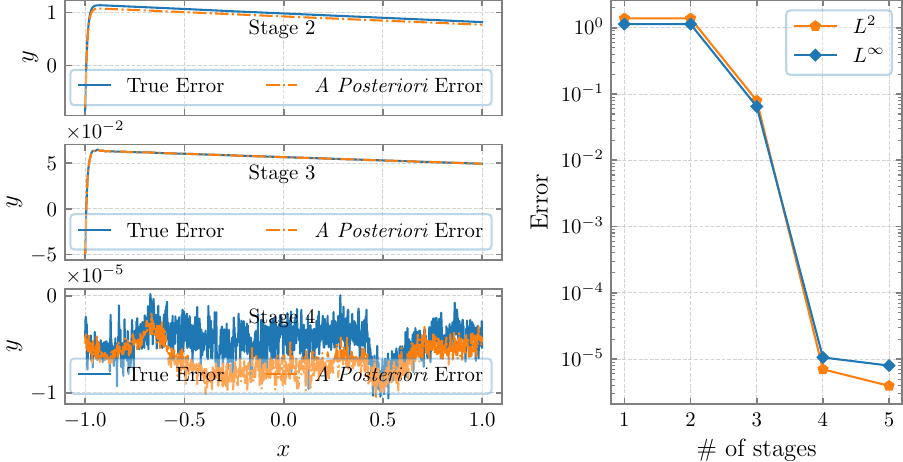}
    \caption{Results for Subsection \ref{subsec:boundary-layer}. Left: the true and {\it a posteriori} errors of stage 2,3, and 4. Right: Error vs. \# of stages.}
    \label{fig:1DBoundary_layer}
\end{figure}

\subsection{Poisson Equation}
The previous examples demonstrate the effectiveness of the algorithm. This subsection aims to show that the proposed method serves as a general framework, which can be further enhanced by integrating with other strategies to improve the solution of some challenging problems.
Considering the following Poisson equation
\begin{equation}
	\label{equ:poisson}
	\begin{aligned}
		-\Delta u &= f, &    \quad &\text{in } \Omega, \\
		u &= g,    & \quad &\text{on } \partial \Omega.
	\end{aligned}
\end{equation}
In this subsection, we analyze two challenging cases for solving \eqref{equ:poisson}: the first examines an L-shaped domain, where geometric singularities lead to non-smooth solutions; and the second focuses on a 2D problem where the source term exhibits rapid, localized variations, making it challenging to resolve near these regions. 

\begin{comment}
\subsubsection{Singular Source Term}

Consider the Poisson equation \eqref{equ:poisson} over the domain \(\Omega = (0,1)\), with the exact solution
\begin{equation}
	u(x) = \sqrt{1 - x^2}.
\end{equation}
In this case, the corresponding source term is singular at \(x = 1\), which poses a significant challenge for accurate numerical solution. The source term is given by
\begin{equation}
	f(x) = \frac{x^2}{(1 - x^2)^{3/2}} + \frac{1}{\sqrt{1 - x^2}}.
\end{equation}
To mitigate the singularity, we multiply both sides of the equation by the factor \((1 - x^2)^{3/2}\), transforming the problem into one with a smooth source term:
\begin{equation}
  -(1 - x^2)^{3/2} u_1^{\prime\prime}(x) = 1.
\end{equation}
Thus, instead of solving the original Poisson equation \eqref{equ:poisson}, we solve the modified equation with the same boundary conditions. It is important to note that this is a variable-coefficients Poisson equation, which is generally challenging for traditional numerical methods. However, by leveraging our collocation framework, we are able to solve this problem with the same computational cost as the standard Poisson equation.
\end{comment}

\subsubsection{L-shaped domain}
\label{subsubsec:L_shaped_domain}
We consider the Poisson equation \eqref{equ:poisson} on an L-shaped domain \(\Omega = (-1,1)^2 \setminus (-1,0]^2\) with a constant source term \(f \equiv 1\) and homogeneous Dirichlet boundary condition \(g \equiv 0\). The primary challenge in this problem arises from the presence of a nonconvex corner at the origin, where the solution exhibits singular behavior.

As established in \cite{ainsworth_extended_2024}, the solution of the Poisson equation in an L-shaped domain can be expressed as a series expansion that accounts for both singular and smooth components. Specifically, the solution is given by:
\[
u(x, y) = \sum_{i=1}^\infty c_i r^{\lambda_i} \sin \left( \lambda_i \theta \right) + u^*(x, y), \quad \lambda_i = \frac{2}{3} i,
\]
where \(u^* \in H^2(\Omega)\) represents the smooth part of the solution. The singular part, characterized by terms of the form \(r^{\lambda_i} \sin \left( \lambda_i \theta \right)\), captures the behavior near the nonconvex corner. % making it crucial for accurate numerical approximation.

In this experiment, we enhance the network by incorporating knowledge-based hidden neurons that correspond to the singular terms \(r^{\lambda_i} \sin \left( \lambda_i \theta \right)\) for \(i\) up to \(N_{k}\) in each training stage. This approach leverages the known structure of the solution to improve accuracy.

\Cref{fig:2DPoisson_Lshaped_extend} illustrates the absolute errors obtained without (left) and with (middle) the inclusion of knowledge-based hidden neurons. It can be observed that incorporating these neurons leads to improved accuracy, with a more consistent decay in the error curve. Notably, the knowledge-based hidden neurons do not introduce additional parameters for optimization; instead, they only slightly increase the size of the least squares system constructed by {\scshape ColloLSQ}. Consequently, the improvement in accuracy is achieved with a minimal increase in computational cost.
\begin{figure}[htbp]
	\centering
	\includegraphics[width=\textwidth]{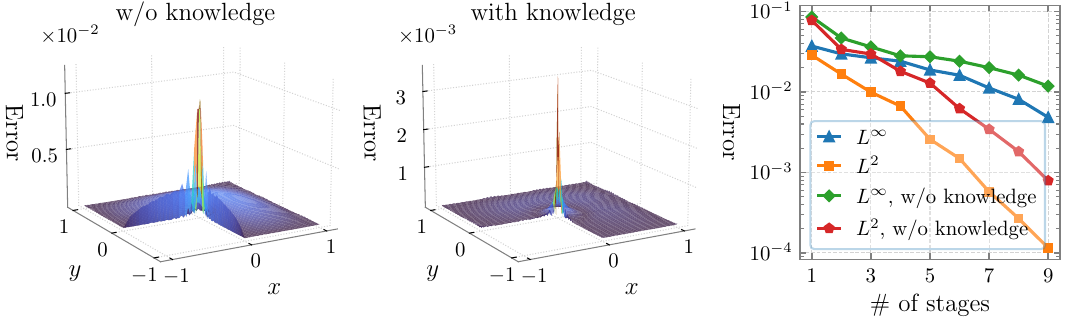}
	\caption{Poisson equation on a L-shaped domain. The absolute errors obtained without (left) and with (middle) the inclusion of knowledge-based hidden neurons. Right: \(L^2\) and \(L^\infty\) errors computed at each stage.}
	\label{fig:2DPoisson_Lshaped_extend}
\end{figure}
\Cref{fig:2DPoisson_Lshaped_extend_new_basis} presents the training results obtained using knowledge-based hidden neurons at stages \(s=3,5,7\), which include the true errors (left column), the {\it a posteriori} errors (middle column), and numerical solutions (right column).
\begin{figure}[htbp]
	\centering
	\includegraphics[width=\textwidth]{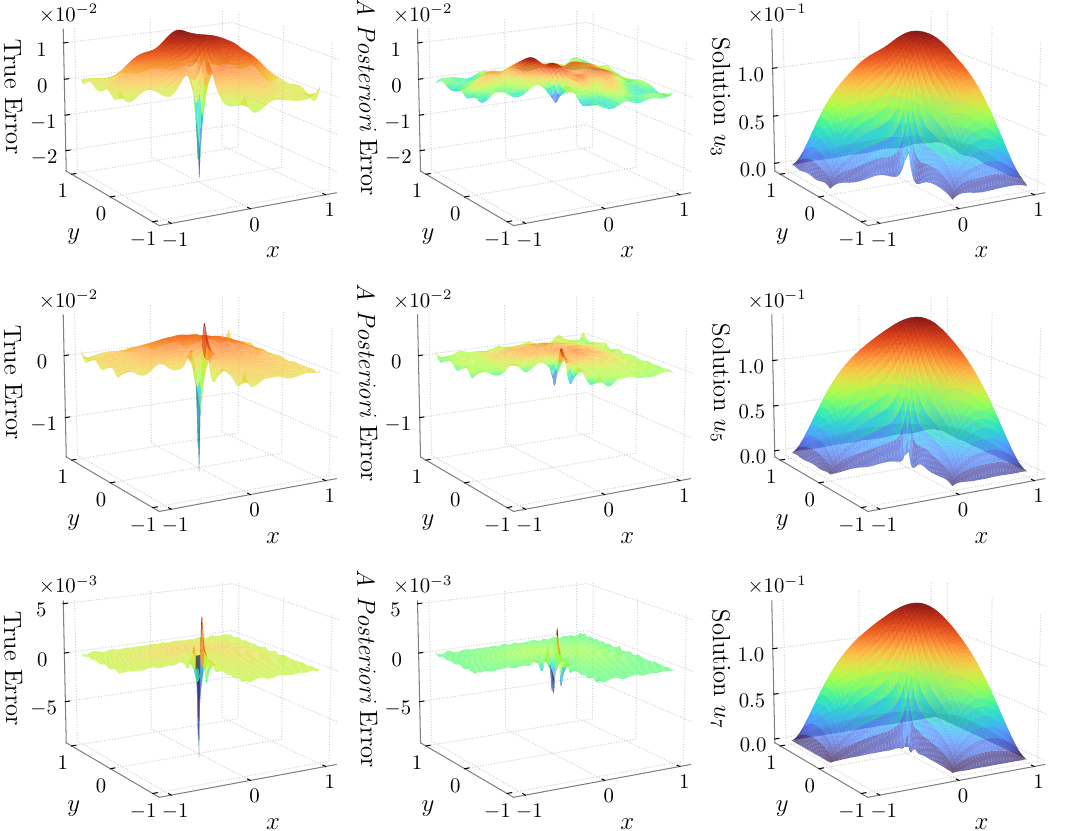}
	\caption{Poisson equation on a L-shaped domain with knowledge-based hidden neurons. True errors (left column), {\it a posteriori} errors (middle column), and numerical solution $u_s$ (right column) of stage 3, 5, and 7.}
	\label{fig:2DPoisson_Lshaped_extend_new_basis}
\end{figure}

\subsubsection{Rapidly Varying Source Term}
\label{subsubsec:rapidly_varying_source_term}

We study the Poisson equation \eqref{equ:poisson} over the domain \(\Omega = (0,1)^2\) with the exact solution given by
\begin{equation}
    u(\bm{x}) = 16\left(1 - x_1\right) x_1\left(1 - x_2\right) x_2 \left( \frac{1}{2} + \frac{1}{\pi} \arctan \left( \frac{r - \|\bm{x} - \bm{c}\|^2}{\varepsilon} \right) \right),
\end{equation}
where \(r = 1 / 16\) and \(\bm{c} = (1 / 2, 1 / 2)\), with the parameter \(\varepsilon\) controlling the sharpness of the transition region in the solution. We consider two values of \(\varepsilon\): \(1 / 120\) and \(1 / 500\). Smaller values of \(\varepsilon\) result in sharper gradients and more localized variations near the boundary of a ball centered at \(\bm{c}\) with radius \(\sqrt{r}\). The corresponding source terms for these two cases are shown in \Cref{fig:2DPoisson_sol_src}. Capturing such steep and localized variations in the source term is a challenging task. 

To address this issue, in addition to the (global) neuron functions using $tanh$ as the activate function, we introduce a kind of localized neuron function, inspired by the layer growth technique given in \cite{dang_adaptive_2024}. 
%The localized neuron functions are constructed as follows. 
Given an approximate solution \(u^*\) and a corresponding set of points \(\{\bm{x}_i\}_{i=1}^{N_L}\), we construct a new network with \(N_L\) localized hidden neurons defined by
\[
    \phi_i(\bm{x}) := \exp\left( -\| \bm{k}_i \odot (\bm{x} - \bm{x}_i) \|^2 \right) \exp\left( -\frac{1}{2} \| \bm{k}_i \|^2 \left( u^*(\bm{x}) - u^*(\bm{x}_i) \right)^2 \right) 
\]
for $i = 1, \ldots, N_L$, where \(\bm{k}_i \sim \mathbb{U}(-R_L, R_L)\) is a shape parameter that modulates the localization effect, and \(\odot\) denotes element-wise multiplication. The localized neuron function \(\phi_i(\bm{x})\) is highly concentrated around the level set \(\{ \bm{x} : u^*(\bm{x}) = u^*(\bm{x}_i) \}\) and is further confined by a Gaussian function centered at \(\bm{x}_i\).
In this work, we determine the points \(\{\bm{x}_i\}_{i=1}^{N_L}\) using Algorithm \ref{alg:rejection}, with the sampling distribution function being the residual of the PDE, \(f - \mathcal{L}u^*\). 
We call these neurons adaptive localized neurons.
After initialization, the network is trained using the {\scshape ColloLSQ} method without further optimization steps.

In contrast to the adaptive initialization strategy discussed in \Cref{subsec:ada_init}, which focuses on capturing different frequency components and approximating the error distribution through globally defined neuron functions, the adaptive localized neuron approach constructs neuron functions that are highly localized around specific points, similar to radial basis functions. This localization allows the method to effectively capture the sharp and localized features of the source term.

\begin{figure}[htbp]
    \centering
    \includegraphics[width=0.75\textwidth]{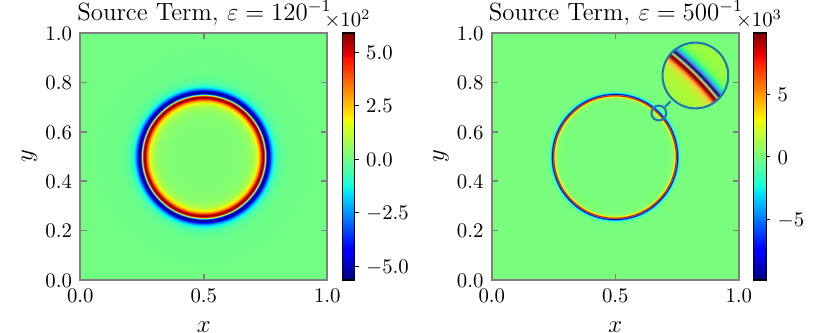}
    \caption{Poisson equation with a rapidly varying source term. The source term corresponds to \(\varepsilon = 120^{-1}\) (left) and \(\varepsilon = 500^{-1}\) (right), with a zoomed-in view.}
    \label{fig:2DPoisson_sol_src}
\end{figure}

The success of the adaptive approach hinges on the quality of the initial approximation \(u^*\). As such, the proposed method using $tanh$ as the activate function is first applied iteratively over \(S_1\) stages to obtain a sufficiently accurate initial solution. After that, the adaptive localized neuron strategy is employed over an additional \(S_2\) stages to further refine the solution. For consistency, we continue to denote the total number of stages as \(S\), i.e., \(S = S_1 + S_2\). 

The training results for both cases are presented in \Cref{fig:2DPoisson_result}. For \(\varepsilon=\) \(120^{-1}\), the method achieves an \(L^{\infty}\) error of 9.924e-7 and an \(L^2\) error of 1.969e-7, while for \(\varepsilon=500^{-1}\), the corresponding errors are 3.978e-6 and 8.951e-7. It is noteworthy that as \(\varepsilon\) decreases (i.e., as the source term becomes more localized), the error curve exhibits slower convergence and increased oscillations, indicating the greater difficulty in resolving sharper transitions. Nevertheless, by incorporating additional localized stages, we successfully reduce the errors in both cases to the same small scale. This highlights the efficiency of the adaptive strategy for achieving high accuracy.

\begin{figure}[htbp]
    \centering
    \includegraphics[width=\textwidth]{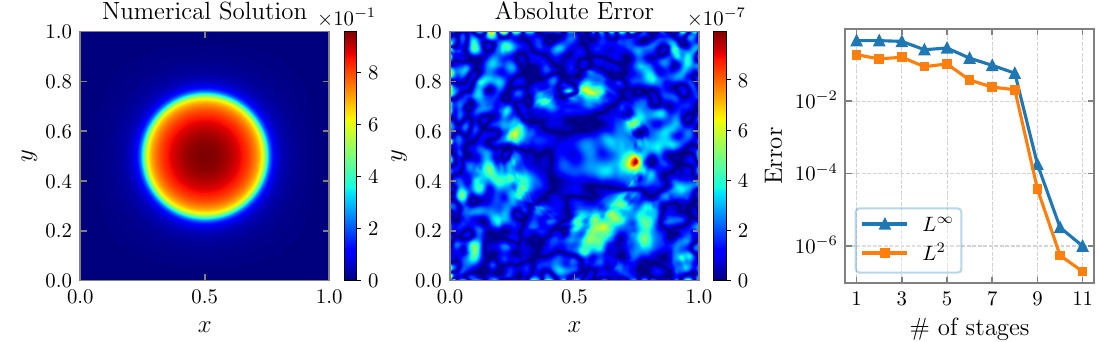}
    \includegraphics[width=\textwidth]{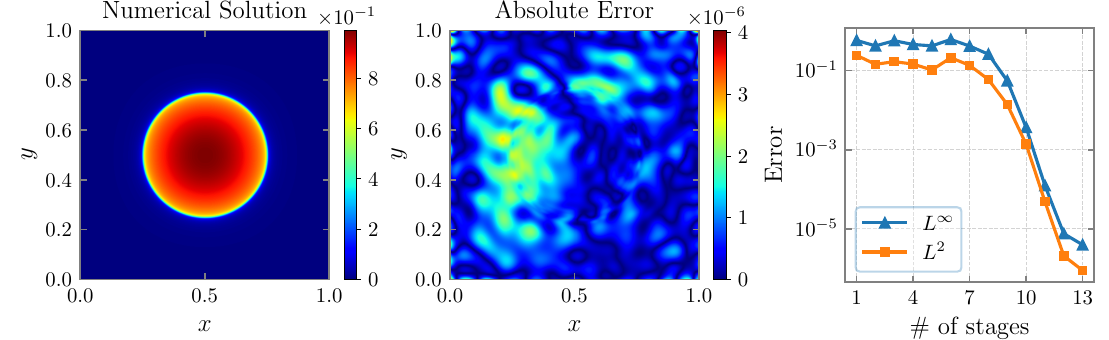}
    \caption{Poisson equation with a rapidly varying source term. Final-stage solution and error, with \(L^2\) and \(L^\infty\) errors at each stage. Top row: \(\varepsilon = 120^{-1}\); bottom row: \(\varepsilon = 500^{-1}\).}
    \label{fig:2DPoisson_result}
\end{figure}

\subsection{Fourth-order problem: Biharmonic equation}
\label{subsec:biharmonic}

In this subsection, we evaluate the proposed method for the biharmonic equation, a fourth-order partial differential equation, in both smooth and singular cases.

\subsubsection{Smooth solution}
\label{subsubsec:biharmonic-smooth}

Consider the biharmonic equation given by
\[
    \begin{aligned}
        & \Delta^2 u = f, && \bm{x} \in \Omega, \\
        & u = \partial_{n} u = 0, && \bm{x} \in \partial \Omega.
    \end{aligned}
\]
We test this on the domain \(\Omega = (-1,1)^2\) with a smooth exact solution
\[
    \begin{aligned}
        u(\bm{x}) = \sin^2 (\pi x) \sin^2 (\pi y) + \left(1 - x^2\right)^4 \left(1 - y^2\right)^4.
    \end{aligned}
\]

The training results are shown in \Cref{fig:2DBiharmonic_smooth}. The error curve demonstrates that the proposed method effectively handles the smooth case, with the error decreasing exponentially and achieving an $L^\infty$ error of 1.399e{-4} and an $L^2$ error of 5.526e{-5} after 7 stages.
\begin{figure}[htbp]
	\centering
	\includegraphics[width=\textwidth]{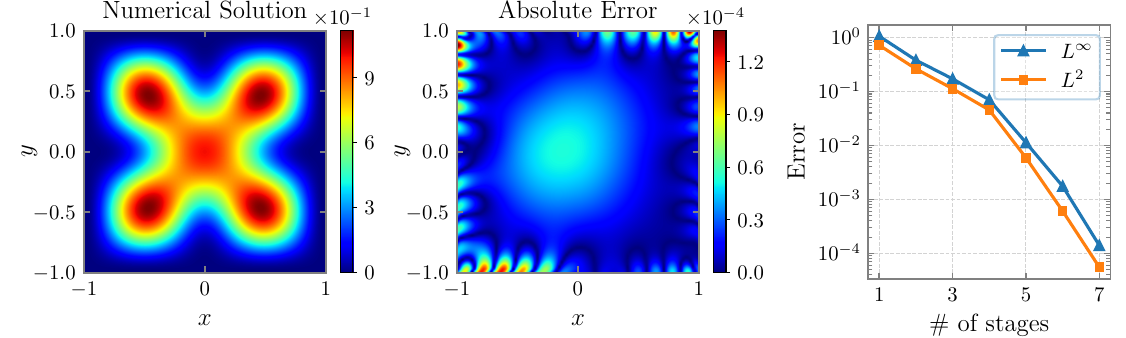}
	\caption{Biharmonic equation with smooth solution. Final-stage approximate solution and error, with \(L^2\) and \(L^\infty\) errors computed at each stage.}
	\label{fig:2DBiharmonic_smooth}
\end{figure}

\subsubsection{Unit circle with point load}
\label{subsubsec:biharmonic-singular}

We consider the biharmonic equation with a point load source term in the unit disk \(\Omega = \left\{(x, y): x^2 + y^2 < 1\right\}\):
\[
    \begin{aligned}
        \label{equ:biharmonic_a}\Delta^2 u &= \delta_{\{(0,0)\}}, &\qquad & \text{in } \Omega, \\
        u - \varepsilon_1 \partial_n(\Delta u) &= 0, &\quad & \text{on } \partial \Omega, \\
        \Delta u + \varepsilon_2 \partial_n u &= 0, &\quad & \text{on } \partial \Omega.
    \end{aligned}
\]
The Dirac delta function is defined in the sense of distributions. To avoid explicitly computing the Dirac function, we replace \eqref{equ:biharmonic_a} with the following equivalent governing equation.
\[
    2\pi r(\Delta u)_r = 1, \qquad \text{in } \Omega.
\]
Although the source term is no longer singular and the equation becomes third-order, the solution remains challenging to solve due to its low regularity, with a singular behavior in the second derivative at the origin, given by
\[
    u(r, \theta) = \frac{r^2}{8 \pi} \ln r + c_1 r^2 + c_2, \quad c_1 = \frac{1}{4 + 2 \varepsilon_2} \left(-\frac{1}{2 \pi} - \frac{\varepsilon_2}{8 \pi}\right), \quad c_2 = -c_1 + \frac{\varepsilon_1}{2 \pi}.
\]

\Cref{fig:2DBiharmonic_new_basis} shows the training results for stages \(s = 1, 3, 5\), including the true errors (left column), the {\it a posteriori} errors (middle column), and the approximate solutions (right column). From \Cref{fig:2DBiharmonic}, we observe that the logarithmic singularity at the origin slows down the error reduction; however, relatively good accuracy is still achieved, with an $L^\infty$ error of 1.028e-4 and an $L^2$ error of 3.304e-5.

\begin{figure}[htbp]
	\centering
	\includegraphics[width=\textwidth]{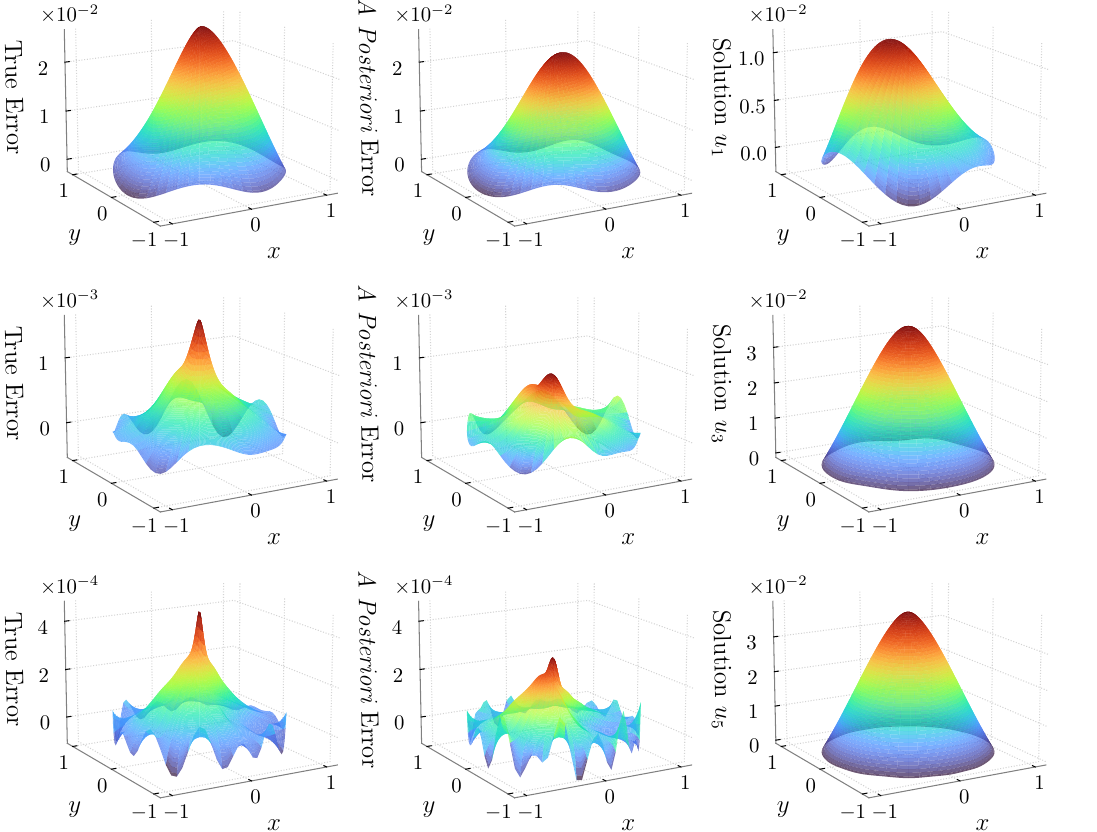}
	\caption{Biharmonic equation with point load. True errors (left column) and {\it a posteriori} errors (middle column) as well as numerical solutions $u_s$ (right column) for s = 1, 3, 5.}
	\label{fig:2DBiharmonic_new_basis}
\end{figure}

\begin{figure}[htbp]
	\centering
	\includegraphics[width=\textwidth]{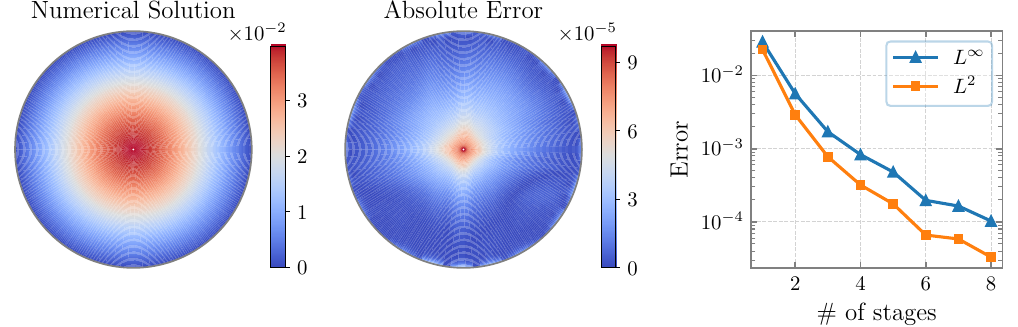}
	\caption{Biharmonic equation with point load. Final-stage approximate solution and error, with \(L^2\) and \(L^\infty\) errors computed at each stage.}
	\label{fig:2DBiharmonic}
\end{figure}

% \subsection{Nonlinear boundary layer problem}

\subsection{A nonlinear example:  the Allen-Cahn equation}
\label{subsec:Allen_Cahn}
Note that the GNN is very time consuming, since it needs to compute the inner products with high accuracy. This prevents GNN from being used for nonlinear or time-dependent problems. Instead, the present collocation type method is more efficient in solving nonlinear or time-dependent problems. To this end, we consider the following nonlinear steady-state Allen-Cahn equation on \(\Omega = (-1, 1)^2\):  
\[
    \begin{aligned}
        -\varepsilon^2 \Delta u + u^3 - u &= 0  &   \quad &\text{in } \Omega, \\  
        u &= 0  &   \quad &\text{on } \partial \Omega.  
    \end{aligned}
\]  
To handle this nonlinear equation, we employ a fixed-point iteration approach. Specifically, at each iteration, the equation is linearized as follows:  
\begin{equation}
\label{equ:ac-linear}
\varepsilon^2 \Delta u_k - \alpha \varepsilon^2 u_k = u_{k-1}^3 - \left(1 + \alpha \varepsilon^2\right) u_{k-1},  
\end{equation}
where \(\alpha\) is a constant introduced to enhance iteration stability. Denote the linearized operator as \(\mathcal{L} = \varepsilon^2 \Delta - \alpha \varepsilon^2\), and the source term at each iteration as \(f(u_{k-1}) = u_{k-1}^3 - \left(1 + \alpha \varepsilon^2\right) u_{k-1}\).

%It is worth noting that our method introduces iterations, which may seem cumbersome in the linear case. However, it is particularly well-suited for nonlinear problems, as we can directly utilize nonlinear iterations without introducing additional iterations. 
Suppose after stage \(s\), we have an approximation \(u_s \in \text{span}\Psi_s\), where \(\Psi_s = \{ \psi_1, \ldots, \psi_s \}\) denotes the set of basis functions. The procedure for constructing the solution at stage \(s+1\) is described below:  
\begin{enumerate}[label=(\arabic*), left=0pt]  
    \item Initialize with \(i = 0\), set \(\tilde{u}_{s+1}^0 = u_s\), and construct a new SLFN basis \(\psi_{s+1}^{i+1}\) using adaptive initialization (Algorithm \ref{alg:adaptive_init}) based on the residual \(f^{\text{res}} = f(u_s) - \mathcal{L} u_s\).  
    \item Generate new residual-based internal collocations using rejection sampling (Algorithm \ref{alg:rejection}).  
    \item Solve \eqref{equ:ac-linear} for the approximation \(u_s^i \in \text{span}\Psi_s\) using {\scshape ColloLSQ} with the source term \(f(\tilde{u}_{s+1}^i)\).  
    \item Train the new basis function \(\psi_{s+1}^{i+1}\) to approximate the residual equation of \eqref{equ:ac-linear} by Algorithm \ref{alg:basis_training} with \(f^{\text{res}} = f(\tilde{u}_{s+1}^i) - \mathcal{L} u_s^i\) and \(g^{\text{res}} = -u_s^i\).  
    \item Update the solution \(\tilde{u}_{s+1}^{i+1} \in \text{span}(\Psi_s \cup \{ \psi_{s+1}^{i+1} \})\) using {\scshape ColloLSQ} with source term \(f(\tilde{u}_{s+1}^i)\).  
    \item Set \(i\gets i+1\) and repeat steps (2)-(6) until a predefined stopping criterion is met.  
    \item Suppose that the process terminates after \(I_{s+1}\) iterations, then update solution and basis set as
    \[
        u_{s+1} = \tilde{u}_{s+1}^{I_{s+1}}, \quad \Psi_{s+1} = \Psi_s\cup\{{\psi}_{s+1}^{I_{s+1}}\}.
    \]
\end{enumerate}  

Regarding the stopping criterion in Step (6), for simplicity in the experiments, we predefine the number of nonlinear iterations for each stage to prevent excessive iterations from causing error saturation. Of course, more adaptive criteria could be designed to better control the iterative process.

For the numerical experiment, we set \(\varepsilon = 0.1\) and \(\alpha \varepsilon^2 = 2\), and use the following initial guess
\[
	u_0(r) = 
	\begin{cases} 
	1, & \text{if } r \leq R_0, \\
	\frac{\log(R_1 / r)}{\log(R_1 / R_0)}, & \text{if } R_0 < r < R_1, \\
	0, & \text{if } r \geq R_1,
	\end{cases}
\]
where $R_0 = 0.7$, $R_1 = 0.9$. 

\Cref{fig:2DAllenCahn_result} shows the approximate solution and error at the final stage, along with the \(L^2\) and \(L^\infty\) errors at each nonlinear iteration. The error curves clearly indicate that the proposed method progressively improves the solution accuracy, achieving an \(L^\infty\) error of 2.350e-5 and an \(L^2\) error of 7.302e-6 after six stages and a total of 27 nonlinear iterations. \Cref{fig:2DAllenCahn_new_basis} further illustrates the true errors (left column), the {\it a posteriori} errors (middle column), and the numerical solutions \(u_s\) (right column) for stages \(s = 1,3,5\).  

\begin{figure}[htbp]
	\centering
	\includegraphics[width=\textwidth]{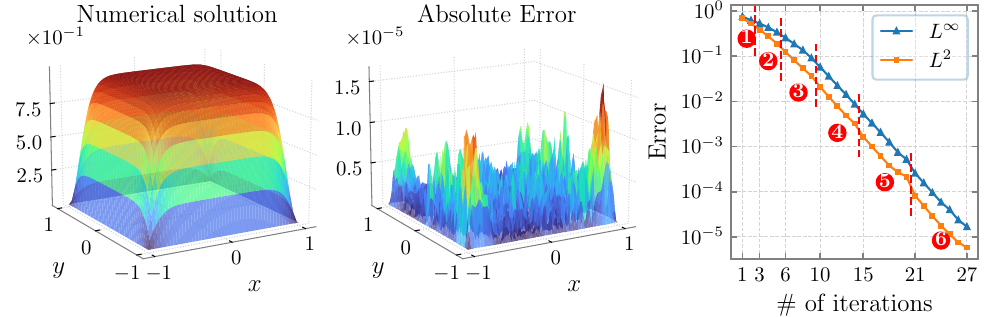}
	\caption{Steady-state Allen-Cahn equation. Final-stage approximate solution and error, with \(L^2\) and \(L^\infty\) errors computed at each nonlinear iteration.}
	\label{fig:2DAllenCahn_result}
\end{figure}

\begin{figure}[htbp]
	\centering
	\includegraphics[width=\textwidth]{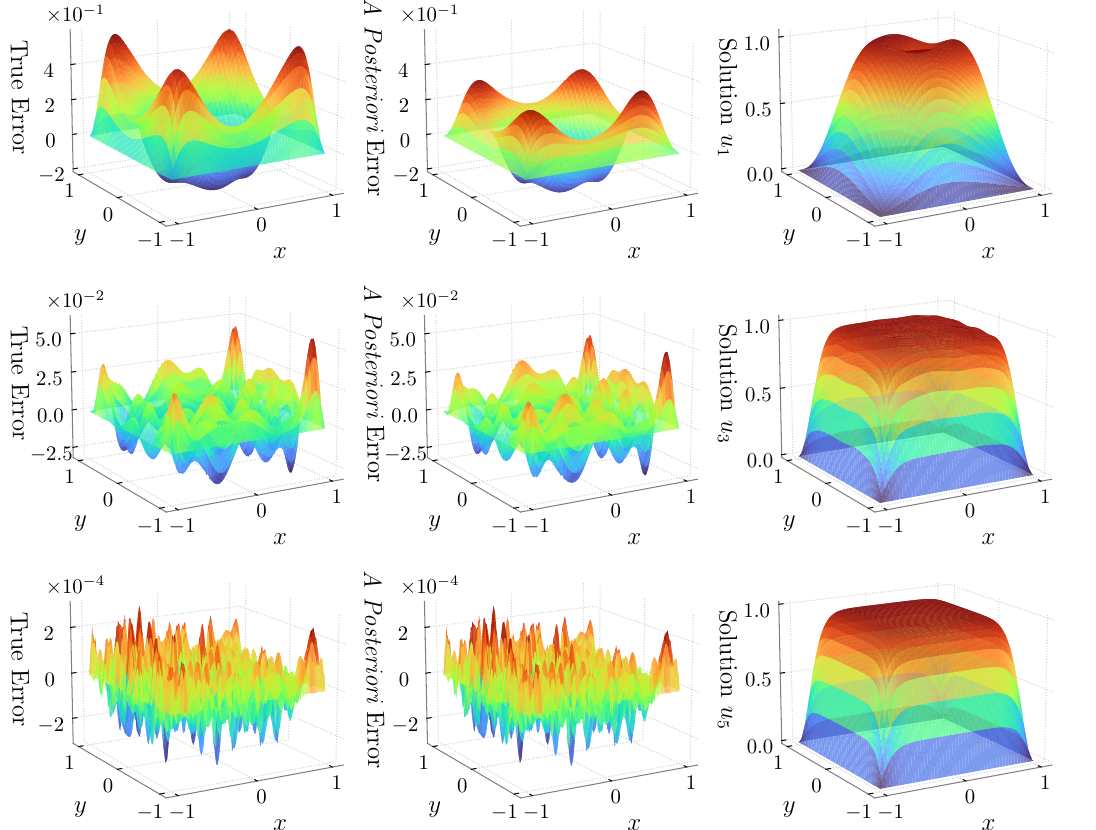}
	\caption{Steady-state Allen-Cahn equation. True errors (left column) and {\it a posteriori} errors (middle column) as well as numerical solutions $u_s$ (right column) for s = 1, 3, 5.}
	\label{fig:2DAllenCahn_new_basis}
\end{figure}

\section{Conclusions}
\label{sec:conclusions}
%In the numerical solution of PDEs, the selection of basis functions is key to ensuring solution precision and stability. 
Deep learning methods have demonstrated great promise in solving PDEs. However, it suffers from accuracy and high computational costs. 
In this paper, we developed an efficient and high accuracy neural network-based approach with robust training for solving PDEs. In particular, we proposed an adaptive method by iteratively constructing a sequence of basis functions using single-hidden-layer neural networks, which can be trained efficiently through the proposed training strategy. Additionally, we developed adaptive strategies for network initialization and collocation sampling, which not only addressed common training challenges but also simplified the tuning of hyperparameters, ensuring stable and accurate performance across a wide range of settings.

We provided an analysis showing that the accuracy of the method improved progressively with each iteration. Furthermore, we derived a reliable {\it a posteriori} error estimator and provided theoretical insights for achieving geometric convergence. We demonstrated both the accuracy and stability of the proposed approach by conducting numerical experiments on various problems, including function fitting, boundary layer problems, the Poisson equation, the biharmonic equation, and the Allen-Cahn equation.
%, showing that errors consistently decreased throughout the iterations. 
Additionally, the {\it a posteriori} error estimator proved effective in all cases.

The proposed method excels not only in convergence and robustness but also in its broad applicability. Thanks to its collocation-based nature, the algorithm is well-suited to a wide variety of problems, including those involving nonlinearity and complex geometries. Moreover, it can be easily extended to incorporate advanced strategies in tackling specific challenges, further enhancing both accuracy and efficiency, as evidenced by the numerical examples for the Poisson equation. Future work will focus on refining the adaptive basis construction technique and exploring its potential applications to even more complex and higher-dimensional problems.
%higher-dimensional spaces

\appendix
\section{Parameters for Numerical Examples}

This section outlines the details and notations used in the numerical experiments.

\begin{enumerate}[label=$\bullet$, left=5pt]
    \item In all experiments, we set the initial condition as \( u_0 = 0 \) and use \( \tanh \) as the activation function.
    \item Candidate points for rejection sampling are randomly drawn from a fine grid, consisting of 1000 points per dimension. This ensures that the selected sample points are well distributed and not overly clustered.
\end{enumerate}
The following notations are used throughout the experiments:
\begin{enumerate}[label=$\bullet$, left=5pt]
    \item \( M_{\Omega}^1 \), \( M_{\Omega}^2 \), and \( M_{\partial \Omega} \) represent the number of uniform and adaptive collocation points within the domain, as well as the number of boundary points, respectively.
    \item \( \lambda \) is the regularization parameter in the loss function.
    \item \( S \) denotes the total number of stages in the algorithm.
    \item \( N_s \) represents the number of hidden neurons in the SLFN at stage \( s \), where \( s = 1,2,\dots,S \).
    \item \( R_s \) is the scaling factor for the weights at stage \( s \), where \( s = 1,2,\dots,S \).
    \item \( N_{opt} \) is the maximum number of optimization steps in the Adam optimizer.
    \item \( \alpha \) denotes the learning rate in the Adam optimizer.
\end{enumerate}

The detailed parameters used in the numerical experiments are presented in \Cref{tab:paras_common}.
\begin{comment}
\Cref{subsec:function_fitting} Function fitting
\Cref{subsec:boundary-layer} Boundary layer problem
\Cref{subsubsec:L_shaped_domain} Poisson equation on L-shaped domain
\Cref{subsubsec:rapidly_varying_source_term} Poisson equation with rapidly varying source term
\Cref{subsubsec:biharmonic-smooth} Biharmonic equation with smooth solution
\Cref{subsubsec:biharmonic-singular} Biharmonic equation with singular solution
\Cref{subsec:Allen_Cahn} Allen-Cahn equation
\end{comment}
\begin{table}[hbpt]
\centering
\label{tab:paras_common}
\caption{Parameters used in the numerical experiments.}
\begin{tabular}{c||ccccccccc}
\hline\hline
Examples  & $M_{\Omega}^1$ & $M_{\Omega}^2$ & $M_{\partial \Omega}$ & $\lambda$ & $S$   & $N_s$    & $R_s$  & $N_{opt}$ & $\alpha$ \\ \hline\hline
Sec \ref{subsec:function_fitting}                       & 512  & 256    & 2      & 1       & 6   & $5\times2^{s-1}$ & $3s-2$ & 10      & 5e-2   \\ \hline
Sec \ref{subsec:boundary-layer}                         & 512  & 256    & 2      & 10      & 5   & $20\times2^{s}$  & $3s+7$ & 20      & 5e-2   \\ \hline
Sec \ref{subsubsec:L_shaped_domain}                     & 1e4  & 5e3    & 4e3    & 1e3     & 9   & $10\times2^{s}$  & $2s-1$ & 10      & 5e-3   \\ \hline
\makecell{Sec \ref{subsubsec:rapidly_varying_source_term}\\ $\varepsilon = 120^{-1}$} & 1e4  & 5e3    & 4e3    & 1e3     & 11 & $10\times2^{s}$  & $3s-2$ & 10      & 5e-2   \\ \hline
\makecell{Sec \ref{subsubsec:rapidly_varying_source_term}\\ $\varepsilon = 500^{-1}$} & 9e4  & 4.5e4  & 1.6e4  & 1e3     & 13 & $10\times2^{s}$  & $3s-2$ & 10      & 5e-2   \\ \hline
Sec \ref{subsubsec:biharmonic-smooth}                   & 1e4  & 5e3    & 4e3    & 1e3     & 7   & $20\times2^{s}$  & $s+1$  & 10      & 5e-3   \\ \hline
Sec \ref{subsubsec:biharmonic-singular}                 & 1e4  & 5e3    & 4e3    & 1e3     & 8   & $10\times2^{s}$  & $s$    & 10      & 5e-3   \\ \hline
Sec \ref{subsec:Allen_Cahn}                             & 1e4  & 5e3    & 4e2    & 1e3     & 6   & $20\times2^{s}$  & $s$    & 10      & 5e-3   \\ \hline\hline
\end{tabular}
\end{table}
In Section \ref{subsubsec:L_shaped_domain}, which examines the Poisson equation on an L-shaped domain, the knowledge-based test incorporates singular terms of the form \( r^{\lambda_i} \sin( \lambda_i \theta ) \) for \( i \) up to \( N_k = 20 \) in each training stage. All other parameters remain consistent with those used in the standard case, as listed in \Cref{tab:paras_common}.
In Section \ref{subsubsec:rapidly_varying_source_term}, additional parameters for localized training are provided in \Cref{tab:poisson_loc_para}.
\begin{table}[hbpt]
    \centering
    \label{tab:poisson_loc_para}
    \caption{Parameters for adaptive localized neuron strategy used in Section \ref{subsubsec:rapidly_varying_source_term}.}
    \begin{tabular}{c|cccc}
    \hline
         Cases & $S_1$ & $S_2$ & $N_L$ & $R_L$\\
         \hline
         $\varepsilon = 120^{-1}$ & 8 & 3 & 1500 & 10\\
         $\varepsilon = 500^{-1}$ & 8 & 5 & 1500 & 10\\ 
         \hline
    \end{tabular}
\end{table}
In Section \ref{subsec:Allen_Cahn}, which considers the steady-state Allen-Cahn equation, the number of iterations per stage is set as \( I_s = s+1 \).

\section*{Acknowledgments}
This work is supported by the National Natural Science Foundation of China Grants  12171404, W2431008 and 12371409.

\bibliographystyle{siamplain}
\bibliography{references}
\end{document}